\newtheorem{theorem}{Theorem}[section]
\newtheorem{lemma}[theorem]{Lemma}
\newtheorem{corollary}[theorem]{Corollary}
\theoremstyle{definition}
\newtheorem{recall}[theorem]{Recall}
\newtheorem{example}[theorem]{Example}
\newtheorem{note}[theorem]{Note}
\journal{ }
\begin{document}
\begin{frontmatter}



\title{On a Van Kampen Theorem for Hawaiian Groups}


\author[]{Ameneh~Babaee}
\ead{Am.Babaee@um.ac.ir}
\author[]{Behrooz~Mashayekhy\corref{cor1}}
\ead{bmashf@um.ac.ir}
\author[]{Hanieh~Mirebrahimi}
\ead{h\_mirebrahimi@um.ac.ir}
\author[]{Hamid~Torabi}
\ead{h.torabi@ferdowsi.um.ac.ir}

\address{Department of Pure Mathematics, Center of Excellence in Analysis on Algebraic Structures, Ferdowsi University of
Mashhad,\\
P.O.Box 1159-91775, Mashhad, Iran.}
\cortext[cor1]{Corresponding author}
\begin{abstract}
 The paper is devoted to study the $n$th Hawaiian group $\mathcal{H}_n$, $n \ge 1$, of the wedge sum of two spaces $(X,x_*) = (X_1, x_1) \vee (X_2, x_2)$.
 Indeed, we are going to give some versions of the van Kampen theorem for Hawaiian groups of the wedge sum of spaces.
 First, among some results on Hawaiian groups of semilocally strongly contractible spaces, we present a structure for the $n$th Hawaiian group of the wedge sum of CW-complexes.
 Second, we give more informative structures for the $n$th Hawaiian group of the wedge sum $X$, when $X$ is semilocally $n$-simply connected at $x_*$.
 Finally, as a consequence, by generalizing the well-known Griffiths space for dimension $n\geq 1$, we give some information about the structure of Hawaiian groups of Griffiths spaces at any points.
\end{abstract}

\begin{keyword}
Hawaiian group\sep Hawaiian earring\sep Van Kampen Theorem\sep Griffiths space.
\MSC[2010]{55Q05, 55Q20, 55P65, 55Q52.}

\end{keyword}

\end{frontmatter}




\section{Introduction and Motivation}
In 2000, K. Eda and K. Kawamura \cite{EdaKaw} generalized the well-known Hawaiian earring space to higher dimensions $n\in \mathbb{N}$ as the following
subspace of $(n+1)$-dimensional Euclidean space ${\mathbb{R}}^{(n+1)}$
\[
\mathbb{HE}^n = \{(r_0,r_1,...,r_n)\in {\mathbb{R}}^{(n+1)}\ |\ (r_0-1/k)^2+{\sum}_{i=1}^n r_i^2 = (1/k)^2, k\in {\mathbb{N}}\}.
\]
Here $\theta = (0,0,...,0)$ is regarded as the base point of the $n$-dimensional Hawaiian earring $\mathbb{HE}^n$, and $\mathbb{S}_k^n$ denotes the $n$-sphere in $\mathbb{HE}^n$ with radius 1/k.

%
In 2006, U.H. Karimov and D. Repov\v{s} \cite{KarRep} defined a new notion, the  $n$th Hawaiian group of
a pointed space $(X,x_0)$, denoted by ${\mathcal{H}}_n(X,x_0)$, to be the set of all pointed  homotopy classes $[f]$, where
$f:({\mathbb{HE}^n},\theta)\to  (X,x_0)$ is a pointed map. The operation of the $n$th Hawaiian group comes naturally from the operation of the $n$th homotopy group so that the following map
\begin{equation}
{\varphi}:{\mathcal{H}}_n(X,x_0) \to    \prod_{\mathbb{N}}{\pi}_n(X,x_0),  \tag{I}
\end{equation}
defined by $\varphi([f])=([f{\mid}_{\mathbb{S}_1^n}], [f{\mid}_{\mathbb{S}_2^n}],... )$ is a homomorphism, for all $n \in \mathbb{N}$. For every pointed space $(X,x_0)$, the image of $\varphi$ and also $\mathcal{H}_n(X,x_0)$ contain $\prod^W_{\mathbb{N}}{\pi}_n(X,x_0)$ as a normal subgroup (see the proof of \cite[Theorem 2.13]{1}).

One can see that ${\mathcal{H}}_n:hTop_{\ast} \to   Groups $ is a covariant functor from the pointed homotopy category,
$hTop_{\ast}$, to the category of all groups, $Groups$, for any $n\geq 1$. If
$\beta:(X,x_0)\to   (Y,y_0)$ is a pointed map, then
$\mathcal{H}_n(\beta)={\beta}_{\ast}:{\mathcal{H}}_n (X,x_0)\to   {\mathcal{H}}_n (Y,y_0)$ defined by
${\beta}_{\ast}([f])= [\beta \circ f]$ is a homomorphism (see \cite{KarRep}).

Although the $n$th Hawaiian group functor is a pointed homotopy invariant functor on the category of all pointed topological spaces, it is not freely homotopy invariant. Because unlike other homotopy invariant functors, Hawaiian groups of contractible spaces are not necessarily trivial.
Karimov and Repov\v{s} \cite{KarRep} gave a contractible space, the cone over $\mathbb{HE}^1$, with nontrivial
$1$th Hawaiian group at some points (consisting of the points at which $C\mathbb{HE}^1$ is not locally $1$-simply connected), but with trivial homotopy, homology and cohomology groups. More precisely, it can be shown that
${\mathcal{H}}_1(C({\mathbb{HE}}^1),\theta)$ is uncountable, using  \cite[Theorem 2]{KarRep}: ``{\it if $X$ is first countable at $x_0$, then countability of $n$th Hawaiian group ${\mathcal{H}}_n (X,x_0)$ implies locally $n$-simply connectedness of $X$ at $x_0$}."
Furthermore, a converse of the above statement can be found in  \cite[Corollaries 2.16 and 2.17]{1}: ``{\it let $X$ be first countable at $x_0$. Then $\mathcal{H}_n(CX, \tilde{x})$ is trivial if and only if $X$ is locally $n$-simply connected at $x_0$ and  it is uncountable otherwise}".
Accordingly, this functor can help us to obtain some local properties of spaces.
In addition, unlike homotopy groups, Hawaiian groups of pointed space $(X,x_0)$ depend on the behaviour of $X$ at $x_0$, and then their structures depend on the choice of the base point. In this regard, there exist some examples of path connected spaces with non-isomorphic Hawaiian groups at different points, such as the $n$-dimensional Hawaiian earring, where $n \ge 2$ (see \cite[Corollary 2.11]{1}).

Despite the above different behaviors between Hawaiian groups and homotopy groups, they have some similar behaviors. For instance, it was proved that similar to the $n$th homotopy group, the $n$th Hawaiian group of any pointed space is abelian,  for $n \ge 2$ \cite[Theorem 2.3]{1}. Also, the Hawaiian groups preserve products in the category $hTop_*$ \cite[theorem 2.12]{1}.

In this paper, we investigate the Hawaiian groups of the coproduct in the category $hTop_*$ which is the wedge sum of a given family of pointed spaces. In fact, we are going to give some versions of the van Kampen theorem for Hawaiian groups of the wedge sum of spaces. In Section 2, among some results on Hawaiian groups of semilocally strongly contractible spaces, we intend to present a structure for the $n$th Hawaiian group of the wedge sum of CW-complexes. A space $X$ is called  semilocally strongly contractible at $x_0$ if there exists some open neighbourhood $U$ of $x_0$ such that the inclusion $i :U \hookrightarrow X$ is nullhomotopic in $X$ relative to $\{x_0\}$, in other words, $i : (U,x_0) \hookrightarrow (X,x_0)$ is nullhomotopic (see \cite{EdaKaw}).

In Section 3, we present the $n$th Hawaiian group of the wedge sum $(X,x_*) = (X_1, x_1) \vee (X_2, x_2)$ as the semidirect product of two its subgroups that are more perceptible, when  $X$ is  semilocally $n$-simply connected at $x_*$. Also, we prove that the Hawaiian group of a pointed space equals the Hawaiian group of every neighbourhood of the base point if all $n$-loops are small.  An $n$-loop $\alpha: (\mathbb{S}^n, 1) \to (X, x)$ is called small, if for each neighbourhood $U$ of $x$, $\alpha$ has a homotopic representative in $U$ (see \cite{PasGha, Vir}).

In Section 4, generalizing the well-known Griffiths space, we define the $n$th Griffiths space for $n \ge 2$,  as the wedge sum of two copies of the cone over the $n$-dimensional Hawaiian earring. For the sake of clarity, we call the well known Griffiths space as the $1$th Griffiths space. Then, using results of Sections 2 and 3, we intend to give some information about the structure of Hawaiian groups of Griffiths spaces at any points.

In this paper, all homotopies are relative to the base point, unless stated otherwise. 

\section{Hawaiian Groups of Semilocally Strongly Contractible Spaces}

In this section, we investigate Hawaiian groups of wedge sum of pointed spaces which are semilocally strongly contractible at the base points. The property semilocally strongly contractibility was defined by Eda and Kawamura \cite{EdaKaw}. First, we compare semilocally strongly contractible property with some familiar properties, such as locally contractible, locally strongly contractible and semilocally contractible properties.
\begin{recall}
Let $(X, x)$ be a pointed space, then $X$ is called locally contractible at $x$ if for each open neighbourhood $U$ of $x$ in $X$, there exists some open neighbourhood $V$ of $x$ contained in $U$ such that the inclusion $V \hookrightarrow U$ is freely nullhomotopic.

We say that $X$ is locally strongly contractible at $x$ if for each open neighbourhood $U$ of $x$ in $X$, there exists some open neighbourhood $V$ of $x$ contained in $U$ such that the inclusion $V \hookrightarrow U$ is nullhomotopic relative to $\{x\}$, or briefly, $(V, x) \hookrightarrow (U, x)$ is nullhomotopic.

Moreover, $X$ is called semilocally contractible at $x$, if there exists some open neighbourhood $U$ of $x$ such that the inclusion $i :U \hookrightarrow X$ is freely nullhomotopic in $X$.
\end{recall}

One can see that there are some relations between these properties.
Locally strongly contractible property implies locally contractibility, and locally contractible property implies semilocally contractibility, but the converse statements do not necessarily hold. Moreover, locally strongly contractible property implies semilocally strongly contractibility, and semilocally strongly  contractible implies semilocally contractibility, but converse statements do not necessarily hold. The following example shows that  a given space can behave differently at different points.

\begin{example}
The $1$th Griffiths space $\mathcal{G}_1$ is locally strongly contractible at two vertices $v$ and $v'$. Therefore, it is locally contractible, semilocally strongly contractible  and semilocally contractible at two vertices $v$ and $v'$. Moreover, $\mathcal{G}_1$ is semilocally contractible at all points except at the common point $x_*$. Also, $\mathcal{G}_1$ is semilocally contractible at any point of $A$ and $A'$, but not semilocally strongly contractible. Finally, $\mathcal{G}_1$ is neither semilocally strongly contractible, semilocally contractible nor even semilocally $1$-simply connected at $x_*$ (see \cite{Gri}).

\begin{figure}[!ht]
\centering
\includegraphics[scale=.5]{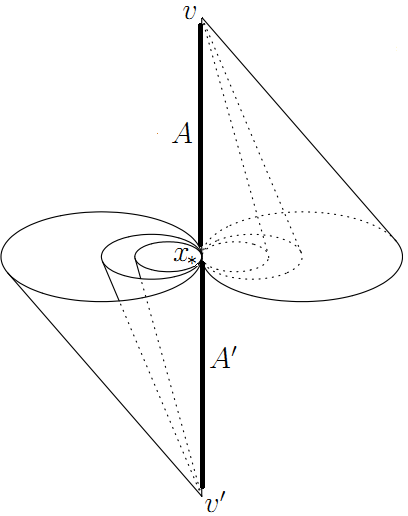}
\caption{The $1$th Griffiths space.}\label{fi1}
\end{figure}
\end{example}

The $1$-dimensional Hawaiian earring $\mathbb{HE}^1$ is not semilocally contractible at the origin. However, it is locally strongly contractible at other points.
There exist many spaces which are semilocally contractible at each point, but not strongly at some points.
For example, consider $C\Delta$ as the cone over the Seirpeinski gasket. It is semilocally contractible at any points, but it is semilocally strongly contractible just at the vertex.

The following lemma describes the Hawaiian group of the wedge sum of two spaces by its subgroups. This construction helps us to obtain next isomorphisms.

\begin{lemma}\label{le4.1}
Let $(X_1,x_1)$ and $(X_2, x_2)$ be two pointed spaces, and $(X, x_*) = (X_1, x_1) \vee (X_2, x_2)$.
\begin{enumerate}[(i)]
\item
If $U_1$ and $U_2$ are two open neighbourhoods of $x_1$ and $x_2$ in $X_1$ and $X_2$, respectively, and $j:U_1 \vee U_2 \to X$ is the inclusion map, then
\begin{equation}\label{eq2.1nn}
\mathcal{H}_n(X, x_*) = j_* \mathcal{H}_n( U_1 \vee U_2, x_*) {\prod_{\mathbb{N}}}^W \pi_n(X,x_*).
\end{equation}
\item
If $x \in X_1 \setminus \{x_1\}$ and $\{x_1\}$ is closed in $X_1$, then
\begin{equation}\label{eq2.2nn}
\mathcal{H}_n(X, x) =
\mathcal{H}_n(X_1, x)   {\prod_{\mathbb{N}}}^W \pi_n(X,x).
\end{equation}
\end{enumerate}
\end{lemma}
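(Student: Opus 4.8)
The plan is to read each identity as an equality of subgroups of $\mathcal{H}_n(X,x_*)$ (respectively $\mathcal{H}_n(X,x)$). The inclusion $\supseteq$ is immediate: $j_*\mathcal{H}_n(U_1\vee U_2,x_*)$ is the image of the homomorphism induced by $j$, and $\prod_{\mathbb{N}}^W\pi_n(X,x_*)$ is a normal subgroup by the remark following equation (I), so their set-product is a subgroup of $\mathcal{H}_n(X,x_*)$. Hence the whole content is the reverse inclusion: every class $[f]$ lies in the product. The essential input is that $U_1\vee U_2$ is open in the wedge $X$ (its trace on each $X_i$ is the open set $U_i$), together with continuity of $f$ at the wild point $\theta$. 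Since $\mathbb{S}^n_k$ is contained in the Euclidean ball of radius $2/k$ about $\theta$, continuity at $\theta$ yields an integer $N$ with $f(\mathbb{S}^n_k)\subseteq U_1\vee U_2$ for all $k>N$.

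Next I would split $f$ along the spheres. Write $A=\bigcup_{k\le N}\mathbb{S}^n_k$ and $B=\{\theta\}\cup\bigcup_{k>N}\mathbb{S}^n_k$, two closed subsets of $\mathbb{HE}^n$ meeting only at $\theta$. Define $h$ to agree with $f$ on $A$ and to be constant $x_*$ on $B$, and define $g$ to be constant $x_*$ on $A$ and to agree with $f$ on $B$; both are continuous by the pasting lemma, and they are compatible at $\theta$ because $f(\theta)=x_*$. By construction $h$ is trivial on all but finitely many spheres, so $[h]\in\prod_{\mathbb{N}}^W\pi_n(X,x_*)$, while $g$ maps $\mathbb{HE}^n$ into $U_1\vee U_2$ (each small sphere by the choice of $N$, and the remaining spheres to $x_*$), giving $[g]\in\mathcal{H}_n(U_1\vee U_2,x_*)$ with $j_*[g]=[j\circ g]$. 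For part (ii) the same argument applies once one replaces $U_1\vee U_2$ by $U=X_1\setminus\{x_1\}$: since $\{x_1\}$ is closed in $X_1$ and $x\neq x_1$, the set $U$ is an open neighbourhood of $x$ in $X$ contained in $X_1$, so the tail map $g$ lands in $X_1$ and represents a class in $\mathcal{H}_n(X_1,x)$ (identified with its image under the inclusion-induced homomorphism).

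Finally I would show $[f]=j_*[g]\cdot[h]$. Recall that the product in $\mathcal{H}_n$ is induced by the comultiplication $c:\mathbb{HE}^n\to\mathbb{HE}^n\vee\mathbb{HE}^n$ pinching the equator of every sphere simultaneously, so that $[g][h]=[(g\vee h)\circ c]$ and on each $\mathbb{S}^n_k$ this restricts to the usual concatenation of the $n$-loops $g|_{\mathbb{S}^n_k}$ and $h|_{\mathbb{S}^n_k}$. The key observation is that $g$ and $h$ have disjoint spherewise supports: on each $\mathbb{S}^n_k$ exactly one of them is constant, so the concatenation there is $x_*$ concatenated with $f|_{\mathbb{S}^n_k}$ (or $f|_{\mathbb{S}^n_k}$ concatenated with $x_*$), which is homotopic rel $\theta$ to $f|_{\mathbb{S}^n_k}$ by the standard reparametrisation collapsing the trivial half. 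Performing these reparametrisations simultaneously should produce a based homotopy $(g\vee h)\circ c\simeq f$, whence $[f]=j_*[g][h]$ lies in the asserted product.

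The hard part will not be any single sphere but the behaviour at the wild point $\theta$: one must verify that the splitting maps $g,h$ and the concatenation homotopy remain continuous there, i.e. that the reparametrisations can be chosen to converge to the identity as $k\to\infty$ so that the whole family glues to a genuine map (respectively homotopy) on $\mathbb{HE}^n$. The disjoint-support structure is precisely what makes this uniform control available, since for $k>N$ the map $h$ is already constant and $g$ coincides with $f$, and symmetrically for $k\le N$; this is where I expect the main technical care to be needed.
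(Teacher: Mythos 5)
Your proposal is correct and follows essentially the same route as the paper: decompose $f$ into a ``head'' map carried by the finitely many spheres not yet inside $U_1\vee U_2$ (giving an element of ${\prod_{\mathbb{N}}}^W\pi_n(X,x_*)$) and a ``tail'' map landing in $U_1\vee U_2$ (resp.\ $X_1\setminus\{x_1\}$), then conclude by normality of ${\prod_{\mathbb{N}}}^W\pi_n$. The only differences are cosmetic: you spell out the factorization $[f]=j_*[g]\,[h]$ and its continuity at $\theta$, which the paper dismisses with ``obviously,'' while the paper additionally records that $X_1$ is a retract of $X$, which is what legitimizes writing $\mathcal{H}_n(X_1,x)$ itself (rather than its image under the inclusion-induced map) as a subgroup in part (ii).
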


\begin{proof}
\begin{enumerate}[(i)]
\item
Let $U_1$ and $U_2$ be two open neighbourhoods of $x_1$ and $x_2$ in $X_1$ and $X_2$, respectively, $j: U_1 \vee U_2 \to X$ be the inclusion map, and
$f: (\mathbb{HE}^n, \theta) \to (X,x_*)$ be a pointed map. Since $U_1 \vee U_2$ is open in $X$,
there exists $K \in \mathbb{N}$ such that  if $k \geq K$, then $im(f|_{\widetilde{\bigvee}_{k \geq K}\mathbb{S}_k^n}) \subseteq U_1 \vee U_2$. We can define $\underline{f}, \overline{f}: \mathbb{HE}^n \to (X,x)$ by
$\underline{f}|_{\bigvee_{k <K}\mathbb{S}_k^n} = f|_{\bigvee_{k <K}\mathbb{S}_k^n}$,
$\underline{f}|_{\widetilde{\bigvee}_{k \geq K}\mathbb{S}_k^n} = C|_{\widetilde\bigvee_{k \ge K}\mathbb{S}_k^n}$, $\overline{f}|_{\bigvee_{k <K}\mathbb{S}_k^n} = C|_{\bigvee_{k <K}\mathbb{S}_k^n}$, and
$\overline{f}|_{\widetilde{\bigvee}_{k \geq K}\mathbb{S}_k^n} = f|_{\widetilde\bigvee_{k \ge K}\mathbb{S}_k^n}$, where $C$ is the constant map.
Obviously, $[f] = [\underline{f}][\overline{f}] = [\overline{f}][\underline{f}]$.  Moreover, $[\underline{f}]$ is an element of $\prod_{\mathbb{N}}^W \pi_n(X,x_*)$ and also, $[\overline{f}]$ is an element of $j_* \mathcal{H}_n(U_1 \vee U_2, x_*)$. Therefore, $\mathcal{H}_n(X, x_*)$ is generated by $j_*\mathcal{H}_n(U_1 \vee U_2, x_*) \cup  \prod_{\mathbb{N}}^W \pi_n(X,x_*)$.
The equality \eqref{eq2.1nn} holds because $\prod_{\mathbb{N}}^W \pi_n(X, x_*) \unlhd \mathcal{H}_n(X,x_*)$ (see the proof of \cite[Theorem 2.13]{1}).

 \item
  Since $X_1$ is a retract of $X$, one can consider $\mathcal{H}_n(X_1, x)$ as a subgroup of $\mathcal{H}_n(X, x) $.
Let $f: (\mathbb{HE}^n, \theta) \to (X,x)$ be a pointed map. Since $\{x_1\}$ is closed in $X_1$, $X_1 \setminus \{x_1\}$ is open. Let $i: X_1 \setminus \{x_1\} \to X$ be the inclusion map. Similar to the previous part, $[f]$ can be factorized as
 $[f] = [\underline{f}][\overline{f}] = [\overline{f}][\underline{f}]$, where $[\underline{f}]$ is an element of $\prod_{\mathbb{N}}^W \pi_n(X,x)$ and also $[\overline{f}]$ is an element of $i_*\mathcal{H}_n(X_1 \setminus \{x_1\}, x)$ which is a subgroup of $\mathcal{H}_n(X_1, x)$. Thus, $\mathcal{H}_n(X, x)$ is generated by $\mathcal{H}_n(X_1, x) \cup  \prod_{\mathbb{N}}^W \pi_n(X,x)$. Again, equality \eqref{eq2.2nn} holds by normality of $\prod_{\mathbb{N}}^W \pi_n(X,x)$.
\end{enumerate}
\end{proof}
Note that equalities \eqref{eq2.1nn} and \eqref{eq2.2nn} may not be the same. For instance, in Example \ref{ex3.9n} one group is trivial, but not the other one, in general.

\begin{note}\label{le3.2n}
Using the property that every open neighbourhood in the wedge sum is a wedge sum of open neighbourhoods, one can rewrite the proof of Lemma \ref{le4.1} for arbitrary spaces stated as follows.
Let $(X,x_0)$ be a pointed space. Then
$\mathcal{H}_n(X,x_0) = j_* \mathcal{H}_n(U, x_0) \prod_{\mathbb{N}}^W \pi_n(X,x_0)$,
for each open neighbourhood $U$ with $x_0 \in U \subseteq X$ and $j : U \to X$ as the inclusion map.
\end{note}

A result similar to the following theorem was proved in \cite[Theorem 2.5]{1} by a slightly different argument.
\begin{theorem}\label{th2.6n}
Let  $(X_1, x_1)$ and $(X_2, x_2)$ be two pointed spaces, $(X, x_*) = (X_1, x_1)  \vee (X_2, x_2)$, and $n \ge 1$.
If $X_1$ and $X_2$ are semilocally strongly contractible at $x_1$ and $x_2$, respectively, then
\begin{equation*}\label{eq2.3nn}
\mathcal{H}_n(X, x_*) = {\prod_{\mathbb{N}}}^W \pi_n(X, x_*).
\end{equation*}
\end{theorem}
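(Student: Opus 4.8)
The plan is to derive the theorem from Lemma \ref{le4.1}(i) by choosing the neighbourhoods $U_1, U_2$ provided by semilocal strong contractibility and showing that they contribute nothing new to the Hawaiian group. By hypothesis there are open neighbourhoods $U_1 \ni x_1$ in $X_1$ and $U_2 \ni x_2$ in $X_2$ together with based nullhomotopies $H_1 : U_1 \times I \to X_1$ and $H_2 : U_2 \times I \to X_2$, where each $H_i$ runs from the inclusion $U_i \hookrightarrow X_i$ to the constant map and fixes $x_i$ throughout. Composing with the inclusions $X_i \hookrightarrow X$ and recalling that $x_1$ and $x_2$ are identified to $x_*$ in the wedge, I would regard $H_1$ and $H_2$ as based nullhomotopies taking values in $(X, x_*)$.

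The crucial step is to assemble $H_1$ and $H_2$ into a single based nullhomotopy of the inclusion $j : (U_1 \vee U_2, x_*) \hookrightarrow (X, x_*)$. Writing $q : U_1 \sqcup U_2 \to U_1 \vee U_2$ for the quotient map, the map $\widetilde{H} : (U_1 \sqcup U_2) \times I \to X$ given by $H_1$ on $U_1 \times I$ and by $H_2$ on $U_2 \times I$ is continuous, and it is constant on the fibres of $q \times \mathrm{id}_I$ because $H_1(x_1, t) = x_* = H_2(x_2, t)$ for every $t$. Since $I$ is locally compact Hausdorff, $q \times \mathrm{id}_I$ is a quotient map, so $\widetilde{H}$ descends to a continuous map $H : (U_1 \vee U_2) \times I \to X$ with $H(\,\cdot\,, 0) = j$, $H(\,\cdot\,, 1) \equiv x_*$ and $H(x_*, t) = x_*$ for all $t$. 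Hence $j$ is based nullhomotopic.

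It then follows that $j_* \mathcal{H}_n(U_1 \vee U_2, x_*)$ is trivial: for any pointed map $g : (\mathbb{HE}^n, \theta) \to (U_1 \vee U_2, x_*)$ the composite $H \circ (g \times \mathrm{id}_I)$ is a based homotopy from $j \circ g$ to the constant map, so $j_*[g] = [j \circ g] = 1$. Substituting this into the factorisation supplied by Lemma \ref{le4.1}(i) yields
$$\mathcal{H}_n(X, x_*) = j_* \mathcal{H}_n(U_1 \vee U_2, x_*) {\prod_{\mathbb{N}}}^W \pi_n(X, x_*) = {\prod_{\mathbb{N}}}^W \pi_n(X, x_*),$$
which is exactly the claimed equality.

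The only genuinely delicate point is the gluing in the second paragraph: I must be sure that the two nullhomotopies fit together continuously across the wedge point. This is where the rel-basepoint hypothesis is essential, since it is precisely the condition $H_1(x_1, t) = x_* = H_2(x_2, t)$ that lets $\widetilde{H}$ factor through the quotient; everything else is a direct application of Lemma \ref{le4.1}(i).
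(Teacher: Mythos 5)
Your proposal is correct and follows the paper's own argument exactly: take the neighbourhoods $U_1$, $U_2$ given by semilocal strong contractibility, join the two based nullhomotopies to nullhomotope the inclusion $j : U_1 \vee U_2 \hookrightarrow X$, conclude that $j_*\mathcal{H}_n(U_1 \vee U_2, x_*)$ is trivial, and finish with Lemma \ref{le4.1}(i). The only difference is that you spell out the gluing across the wedge point (via the quotient map $q \times \mathrm{id}_I$ and local compactness of $I$), which the paper compresses into the phrase ``by joining these homotopies.''
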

\begin{proof}
Since $X_1$ and $X_2$ are semilocally strongly contractible at $x_1$ and $x_2$, respectively, there exist open neighbourhoods $U_1$ of $x_1$ and $U_2$ of $x_2$ such that inclusion maps $(U_1, x_1) \hookrightarrow (X_1, x_1)$ and $(U_2, x_2) \hookrightarrow (X_2, x_2)$ are nullhomotopic. By joining these homotopies, one can see that $j:U_1 \vee U_2 \to X_1 \vee X_2$ is nullhomotopic and hence  $j_* \mathcal{H}_n(U_1 \vee U_2, x_*)$ is trivial. The result holds by Lemma \ref{le4.1}.
\end{proof}

The following example shows that Theorem \ref{th2.6n} does not hold without condition semilocally strongly contractible on both of spaces.
\begin{example}\label{ex2.10nn}
Consider $x_*$ as the common point of the $1$th Griffiths space. We show that if Theorem \ref{th2.6n} holds for the $1$th Griffiths space at $x_*$, then $\pi_1(\mathcal{G}_1, x_*)$ is trivial which is a contradiction (see \cite{Gri}). Assume that $\mathcal{H}_1(\mathcal{G}_1, x_*) = \prod_{\mathbb{N}}^W \pi_1(\mathcal{G}_1, x_*)$, then the homomorphism $\varphi: {\mathcal{H}}_n(X,x_0) \to    \prod_{\mathbb{N}}{\pi}_n(X,x_0)$ (see (I)) can be considered as the natural injection. If we show that the homomorphism $\varphi$ is surjective, then the natural injection $\prod^W_{\mathbb{N}} \pi_1(\mathcal{G}_1, x_*) \to \prod_{\mathbb{N}}  \pi_1(\mathcal{G}_1, x_*)$ is surjective which is impossible in a nontrivial way.

To prove surjectivity of $\varphi$, let $\{[f_k]\} \in \prod_{\mathbb{N}} \pi_1(\mathcal{G}_1, x_*)$. Since any $n$-loop at $x_*$ is small, we can find a homotopic representative $f'_k$ of $[f_k]$ in $U_k$, for all $k \in \mathbb{N}$. Now we can define $f:\mathbb{HE}^1 \to \mathcal{G}_1$ by $f|_{\mathbb{S}_k^1} = f'_k$, satisfying $\varphi ([f])= \{[f_k]\}$.
 \end{example}

CW-complexes  are locally homeomorphic to some cells. Thus, they are semilocally strongly contractible at any point. The following result presents an isomorphism for the Hawaiian group of the wedge sum of CW-complexes.

\begin{corollary}\label{co2.5n}
Let $X_1$ and $X_2$ be two locally finite $(n-1)$-connected CW-complexes, $(X, x_*)=(X_1, x_1) \vee (X_2, x_2)$, and $n \geq 2$. Then
\begin{equation}\label{eq2.1}
\mathcal{H}_n(X,x_*) \cong\mathcal{H}_n(X_1, x_1) \oplus \mathcal{H}_n(X_2, x_2).
\end{equation}
\end{corollary}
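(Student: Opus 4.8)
The plan is to reduce the entire statement to a computation about ordinary homotopy groups, using Theorem \ref{th2.6n} to strip away the Hawaiian-specific difficulties. The key input, already recorded in the paragraph preceding the corollary, is that a CW-complex is semilocally strongly contractible at every point; in particular $X_1$, $X_2$, and hence their wedge $X$, all satisfy the hypothesis of Theorem \ref{th2.6n}.

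First I would apply Theorem \ref{th2.6n} directly to $X = X_1 \vee X_2$ to obtain $\mathcal{H}_n(X, x_*) = \prod_{\mathbb{N}}^W \pi_n(X, x_*)$. Next, applying the single-space version recorded in Note \ref{le3.2n} to each factor: choosing an open neighbourhood $U_i$ of $x_i$ whose inclusion $(U_i, x_i) \hookrightarrow (X_i, x_i)$ is nullhomotopic makes $j_* \mathcal{H}_n(U_i, x_i)$ trivial, so that $\mathcal{H}_n(X_i, x_i) = \prod_{\mathbb{N}}^W \pi_n(X_i, x_i)$ for $i = 1, 2$. Thus the corollary is reduced to the purely algebraic isomorphism $\prod_{\mathbb{N}}^W \pi_n(X, x_*) \cong \prod_{\mathbb{N}}^W \pi_n(X_1, x_1) \oplus \prod_{\mathbb{N}}^W \pi_n(X_2, x_2)$.

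The central step is then to identify $\pi_n(X, x_*)$, and here the connectivity hypothesis enters. Since $X_1$ and $X_2$ are $(n-1)$-connected CW-complexes, the pair $(X_1 \times X_2, X_1 \vee X_2)$ is $(2n-1)$-connected, the classical consequence of the Blakers--Massey theorem. From the long exact sequence of this pair, the inclusion $X_1 \vee X_2 \hookrightarrow X_1 \times X_2$ induces an isomorphism on $\pi_i$ for $i \le 2n-2$; since $n \ge 2$ gives $n \le 2n-2$, we obtain $\pi_n(X, x_*) \cong \pi_n(X_1 \times X_2) \cong \pi_n(X_1, x_1) \oplus \pi_n(X_2, x_2)$, the last identification being the standard splitting of the homotopy groups of a product. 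I expect this homotopy-theoretic splitting to be the main obstacle, as it is the only place where the connectivity assumption and the restriction $n \ge 2$ genuinely enter; the local finiteness of the complexes is what guarantees the good local behaviour (and thus the semilocal strong contractibility) invoked above.

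Finally I would transport this isomorphism through the weak product. Because $n \ge 2$, all groups in sight are abelian, so $\oplus$ is meaningful, and forming the weak direct product $\prod_{\mathbb{N}}^W(-)$ commutes with finite direct sums: a sequence in $A \oplus B$ is eventually trivial precisely when each of its two coordinate sequences is, whence $\prod_{\mathbb{N}}^W(A \oplus B) \cong \prod_{\mathbb{N}}^W A \oplus \prod_{\mathbb{N}}^W B$. Applying this to the coordinatewise isomorphism $\pi_n(X, x_*) \cong \pi_n(X_1, x_1) \oplus \pi_n(X_2, x_2)$ yields $\prod_{\mathbb{N}}^W \pi_n(X, x_*) \cong \prod_{\mathbb{N}}^W \pi_n(X_1, x_1) \oplus \prod_{\mathbb{N}}^W \pi_n(X_2, x_2)$, which together with the two identifications from the first step gives \eqref{eq2.1} and completes the proof.
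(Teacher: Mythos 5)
Your proof is correct and takes essentially the same route as the paper: reduce $\mathcal{H}_n(X,x_*)$ and each $\mathcal{H}_n(X_i,x_i)$ to weak products of $\pi_n$ via semilocal strong contractibility (Theorem \ref{th2.6n} and Note \ref{le3.2n}; the paper cites \cite[Theorem 2.5]{1} for the single-space step), then split $\pi_n(X_1\vee X_2)\cong \pi_n(X_1)\oplus\pi_n(X_2)$ and rearrange the weak product over the direct sum. Your appeal to Blakers--Massey is exactly the content of the paper's citation of \cite[Proposition 6.36]{Swi}, so the two arguments coincide in substance.
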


\begin{proof}
CW-complexes are semilocally strongly contractible. Thus by Theorem \ref{th2.6n},
$\mathcal{H}_n(X,x) \cong \bigoplus_{\mathbb{N}} \pi_n(X,x)$, when  $n \geq 2$.
Now by \cite[Proposition 6.36]{Swi},
$\pi_n(X,x) \cong \pi_n(X_1) \oplus  \pi_n(X_2)$, and after a rearrangement,
$\mathcal{H}_n(X,x) \cong \bigoplus_{\mathbb{N}} \pi_n(X_1) \oplus \bigoplus_{\mathbb{N}}\pi_n(X_2)$. We obtain the result, using \cite[Theorem 2.5]{1}.
\end{proof}

An analogous isomorphism for \eqref{eq2.1} does not hold, when $n =1$. To obtain such an isomorphism on the $1$th Hawaiian group, we must replace direct sum by free product, because the fundamental group and the $1$th Hawaiian group are not abelian groups, in general. Using Theorem \ref{th2.6n} and the van Kampen Theorem for wedge sum we have the following result.
\begin{corollary}\label{co2.10n}
Let $X_1$ and $X_2$ be two semilocally strongly contractible spaces at $x_1$ and $x_2$, respectively, and $(X, x_*)= (X_1, x_1) \vee (X_2, x_2)$. Then
\begin{equation}\label{eq2.2n}
\mathcal{H}_1(X, x_*) \cong {\prod_{\mathbb{N}}}^W \big( \pi_1(X_1,x_1) * \pi_1(X_2,x_2) \big).
\end{equation}
\end{corollary}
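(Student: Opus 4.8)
The plan is to follow the recipe announced just before the statement: apply Theorem~\ref{th2.6n} in the case $n=1$ to strip the problem down to a statement about fundamental groups, and then invoke the van Kampen theorem for a wedge. Since $X_1$ and $X_2$ are semilocally strongly contractible at $x_1$ and $x_2$, Theorem~\ref{th2.6n} gives
\[
\mathcal{H}_1(X, x_*) = {\prod_{\mathbb{N}}}^W \pi_1(X, x_*),
\]
so it remains to identify $\pi_1(X, x_*)$ and push the identification through the weak-product construction $\prod_{\mathbb{N}}^W(-)$.

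To compute $\pi_1(X, x_*)$ I would use the open neighbourhoods $U_1$ of $x_1$ and $U_2$ of $x_2$ supplied by semilocal strong contractibility, for which the inclusions $(U_i, x_i) \hookrightarrow (X_i, x_i)$ are nullhomotopic. Setting $A = X_1 \vee U_2$ and $B = U_1 \vee X_2$ yields an open cover of $X$ with path-connected intersection $A \cap B = U_1 \vee U_2$ containing $x_*$, so the van Kampen theorem applies. The nullhomotopies make the images of $\pi_1(U_i)$ in the relevant fundamental groups trivial, so the amalgamating relations collapse and one is left with the free product $\pi_1(X, x_*) \cong \pi_1(X_1, x_1) * \pi_1(X_2, x_2)$.

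Finally, because $\prod_{\mathbb{N}}^W(-)$ is built termwise over the fixed index set $\mathbb{N}$, a group isomorphism $G \cong H$ induces an isomorphism $\prod_{\mathbb{N}}^W G \cong \prod_{\mathbb{N}}^W H$; applying this to the isomorphism of the previous step and combining with the displayed equality gives \eqref{eq2.2n}. I expect the genuine work to lie in the middle step: the nullhomotopy guaranteed by semilocal strong contractibility is only relative to the base point and takes its values in $X_i$ rather than inside $U_i$, so some care is required to verify that it really forces the free-product form of van Kampen (equivalently, that the images of the intersection's fundamental group die off), rather than leaving residual amalgamating relations. The reduction via Theorem~\ref{th2.6n} and the transport through the weak product are both formal.
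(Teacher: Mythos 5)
Your outer scaffolding coincides with the paper's: the paper's entire proof of this corollary is the sentence preceding it (apply Theorem \ref{th2.6n}, then invoke ``the van Kampen theorem for wedge sum''), and your first and last steps --- the reduction $\mathcal{H}_1(X,x_*) = \prod_{\mathbb{N}}^W \pi_1(X,x_*)$ and the transport of an isomorphism through $\prod_{\mathbb{N}}^W(-)$ --- are correct and formal. The gap is in the middle step, and it is worse than the ``care is required'' you flag: the inference you propose is false as stated. Your collapse argument uses the nullhomotopies only through the consequence that the images of $\pi_1(U_i,x_i)\to\pi_1(X_i,x_i)$ are trivial, i.e.\ through semilocal $1$-simple connectivity. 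But the Griffiths space satisfies that weaker hypothesis at its bad point: each cone $C\mathbb{HE}^1$ is simply connected, so the image of $\pi_1$ of \emph{every} neighbourhood of $x_*$ in the cone is trivial; nevertheless $\pi_1(\mathcal{G}_1,x_*)\neq 1 = \pi_1(C\mathbb{HE}^1)\ast\pi_1(C\mathbb{HE}^1)$ (this is exactly the point of the paper's Example \ref{ex2.10nn}). Hence no argument that uses the hypothesis only in this form can yield the free-product formula.

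Concretely, here is where your Seifert--van Kampen set-up breaks. With $A = X_1\vee U_2$ and $B = U_1\vee X_2$, the theorem expresses $\pi_1(X)$ in terms of $\pi_1(X_1\vee U_2)$, $\pi_1(U_1\vee X_2)$ and $\pi_1(U_1\vee U_2)$ --- three wedge groups of exactly the same kind as the one being computed, so the decomposition is circular: none of them may be identified with $\pi_1(X_1)$, $\pi_1(X_2)$, or a group mapping trivially, without already solving the problem. In particular $\pi_1(U_1\vee U_2)$ contains classes of loops making infinitely many excursions alternating between $U_1$ and $U_2$ (the Hawaiian-earring/Griffiths phenomenon); such classes are not products of elements coming from $\pi_1(U_1)$ and $\pi_1(U_2)$, so killing the latter does not collapse the amalgamating relations. (Also, $U_1\vee U_2$ need not be path connected, which the classical statement requires, and the nullhomotopy $H^2:U_2\times I\to X_2$ leaves $A$, so it cannot even trivialize the image of $\pi_1(U_2)$ in $\pi_1(A)$.) What ``strongly'' actually buys, and what a correct proof must use, is the \emph{single} homotopy $H^i:U_i\times I\to X_i$ rel $x_i$: for any loop (or nullhomotopy) in $X$, all but finitely many of its excursions lie in $U_1\vee U_2$ by compactness, and since $H^i(x_i,t)=x_i$, the individual erasures glue into one continuous homotopy that kills all these infinitely many small excursions simultaneously; this gives both surjectivity and injectivity of $\pi_1(X_1)\ast\pi_1(X_2)\to\pi_1(X)$, and it is precisely what the Griffiths cones lack. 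Alternatively, do what the paper does and quote the wedge form of the van Kampen theorem for spaces semilocally strongly contractible at the wedge points as a known result, rather than deriving it from the open-cover version.
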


Note that the isomorphism \eqref{eq2.2n} is not similar to the case $n \ge 2$, even if $X$ is a special CW-complex.
Because if we consider $\mathcal{H}_1(X, x_*) \cong\mathcal{H}_1(X_1,x_1) * \mathcal{H}_1(X_2,x_2)$, then by Theorem \ref{th2.6n}, $\mathcal{H}_1(X, x_*) \cong \prod^W_{\mathbb{N}} \pi_1(X_1,x_1) * \prod^W_{\mathbb{N}}  \pi_1(X_2,x_2)$. Hence, by isomorphism \eqref{eq2.2n} we must have
\[
{\prod_{\mathbb{N}}}^W \big( \pi_1(X_1,x_1) * \pi_1(X_2,x_2) \big) \cong {\prod_{\mathbb{N}}}^W \pi_1(X_1,x_1) * {\prod_{\mathbb{N}}}^W  \pi_1(X_2,x_2),
\]
which is impossible by \cite[Page 183, 6.3.10]{Rob}, in a nontrivial way.
Therefore, $\mathcal{H}_1(X, x_*)\not \cong \mathcal{H}_1(X_1,x_1) * \mathcal{H}_1(X_2,x_2)$, in a nontrivial way.

\section{Hawaiian Groups of Semilocally $n$-Simply Connected Spaces}

In this section, we study more on Hawaiian groups of the wedge sum in semilocally $n$-simply connected spaces. We present results for $n=1$ and $n \ge 2$, separately, due to the difference in group structures.

Recall that for $n \geq 1$, a space $X$ is called $n$-simply connected at $x$ if $\pi_n(X,x)$ is trivial and it is called $n$-connected at $x$ if $\pi_j(X,x)$ is trivial, for $ 1 \leq j \leq n$.
Also, $X$ is called semilocally $n$-simply connected at $x$ if there exists a neighbourhood $U$ of $x$ such that the homomorphism $\pi_n (j): \pi_n(U,x) \to \pi_n(X,x)$, induced by the inclusion, is trivial.
\begin{theorem}\label{th2.8n}
Let $(X_1,x_1)$ and $(X_2, x_2)$ be two pointed spaces  and $(X, x_*) = (X_1, x_1) \vee (X_2, x_2)$. If $X$ is semilocally $1$-simply connected at $x_*$, then
\[
\mathcal{H}_1(X, x_*) \cong j_* \mathcal{H}_1 (U_1 \vee U_2) \ltimes {\prod_{\mathbb{N}}}^W  \pi_1(X, x_*),
\]
for some neighbourhood $U_1 \vee U_2$ of $x_*$ with the inclusion map $j: U_1\vee U_2 \to X$.
\end{theorem}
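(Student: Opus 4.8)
The plan is to exhibit the asserted semidirect product as an \emph{internal} one inside $\mathcal{H}_1(X,x_*)$, with $N := {\prod_{\mathbb{N}}}^W \pi_1(X,x_*)$ as the normal factor and $K := j_*\mathcal{H}_1(U_1\vee U_2, x_*)$ as a complement. Recall the standard recognition criterion: if $N \unlhd G$, $K \leq G$, $KN = G$ and $K \cap N = \{e\}$, then $G \cong K \ltimes N$ with $K$ acting on $N$ by conjugation. Here $N \unlhd \mathcal{H}_1(X,x_*)$ is already known (the proof of \cite[Theorem 2.13]{1}), and $K$ is a subgroup as the image of the homomorphism $j_*$. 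So the whole argument reduces to verifying $KN = \mathcal{H}_1(X,x_*)$ and $K \cap N = \{e\}$.

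First I would pin down the neighbourhood. Semilocal $1$-simple connectivity at $x_*$ furnishes an open $U \ni x_*$ for which $\pi_1(j)\colon \pi_1(U, x_*) \to \pi_1(X, x_*)$ is trivial; since any neighbourhood of $x_*$ in a wedge is itself a wedge $U_1 \vee U_2$ of neighbourhoods of $x_1$ and $x_2$ (Note \ref{le3.2n}), I take this $U = U_1\vee U_2$. For exactly this choice, Lemma \ref{le4.1}(i) gives the product decomposition $\mathcal{H}_1(X,x_*) = KN$, so that requirement is free.

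The heart of the proof is $K\cap N=\{e\}$, which I would establish by comparing both factors with $\ker\varphi$, where $\varphi$ is the homomorphism of (I). On the one hand $K \subseteq \ker\varphi$: a typical element is $j_*[g]$ with $g\colon (\mathbb{HE}^1,\theta)\to (U_1\vee U_2, x_*)$, and each restriction $j\circ g|_{\mathbb{S}_k^1}$ represents $\pi_1(j)$ of a class in $\pi_1(U_1\vee U_2, x_*)$, hence is null-homotopic in $X$; therefore $\varphi(j_*[g]) = e$. This is the single place where the semilocal hypothesis enters. On the other hand $\varphi$ is injective on $N$: a class of ${\prod_{\mathbb{N}}}^W \pi_1(X,x_*)$ has a representative supported on finitely many circles $\mathbb{S}_1^1,\dots,\mathbb{S}_K^1$, and if all its restrictions are null-homotopic then, the support being a \emph{finite} wedge, the map is null-homotopic rel $x_*$. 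Combining the two observations, $K\cap N \subseteq \ker\varphi \cap N = \{e\}$.

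I expect the injectivity of $\varphi|_N$ to be the point demanding the most care, because it hinges on the fact that a pointed map out of a finite wedge of circles becomes null-homotopic as soon as each of its finitely many restrictions does — a property that fails for the full Hawaiian earring and is precisely what separates ${\prod_{\mathbb{N}}}^W$ from $\prod_{\mathbb{N}}$. Once $K\cap N=\{e\}$, $KN=\mathcal{H}_1(X,x_*)$ and $N\unlhd \mathcal{H}_1(X,x_*)$ are all in place, the recognition criterion yields $\mathcal{H}_1(X,x_*)\cong K\ltimes N$, which is the assertion.
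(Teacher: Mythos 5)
Your proposal is correct, and its overall skeleton is the one the statement forces: exhibit $\mathcal{H}_1(X,x_*)$ as an internal semidirect product with $N={\prod_{\mathbb{N}}}^W\pi_1(X,x_*)$ normal and $K=j_*\mathcal{H}_1(U_1\vee U_2,x_*)$ a complement, getting $KN=\mathcal{H}_1(X,x_*)$ from Lemma \ref{le4.1}(i) and choosing $U_1\vee U_2$ via semilocal $1$-simple connectivity, exactly as the paper does. Where you genuinely diverge is the heart of the matter, $K\cap N=\langle e\rangle$. The paper argues inside Hawaiian groups: it takes $[f]$ in the intersection, lifts it to $\tilde f$ in $U_1\vee U_2$ (using $[f]\in K$), uses $[f]\in N$ to assume $f$ is constant on the tail $\widetilde{\bigvee}_{k\ge K}\mathbb{S}^1_k$, and then needs \cite[Lemma 2.2]{1} to replace $\tilde f$ by a lift $\hat f$ whose own tail is nullhomotopic, so that $[\hat f]\in{\prod_{\mathbb{N}}}^W\pi_1(U_1\vee U_2,x_*)$ and hence $[f]\in{\prod_{\mathbb{N}}}^W\pi_1(j)\pi_1(U_1\vee U_2,x_*)=\langle e\rangle$. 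You instead factor everything through the comparison homomorphism $\varphi$ of (I): $K\subseteq\ker\varphi$ coordinate-wise (this is where the trivial homomorphism $\pi_1(j)$ enters, just as in the paper), while $\varphi|_N$ is injective because a finitely supported representative lives on a finite wedge plus a constant tail, where coordinate-wise nullhomotopies paste to a genuine pointed nullhomotopy. Your route buys two things: it bypasses the replacement lemma \cite[Lemma 2.2]{1} entirely, and it proves slightly more than needed ($K$ lies in $\ker\varphi$, not merely that the intersection is trivial). Its cost is that the injectivity of $\varphi|_N$ must be justified; you correctly flag this as the delicate point, and it is in substance the fact (from the proof of \cite[Theorem 2.13]{1}) that ${\prod_{\mathbb{N}}}^W\pi_1(X,x_*)$ embeds in $\mathcal{H}_1(X,x_*)$ compatibly with $\varphi$, so your argument is sound and, modulo that standard fact, somewhat cleaner than the paper's.
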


\begin{proof}
By Lemma \ref{le4.1} part 1, $\mathcal{H}_1(X, x_*) = j_*\mathcal{H}_1(U_1 \vee U_2, x_*)  \prod_{\mathbb{N}}^W \pi_1(X,x_*)$ for each open neighbourhood $U_1 \vee U_2$ of $x_*$. Let $U_1 \vee U_2$ be the neighbourhood for which $\pi_1(j):\pi_1(U_1 \vee U_2, x_*) \to \pi_1(X, x_*)$ is the trivial homomorphism and let $[f] \in j_*\mathcal{H}_1(U_1 \vee U_2, x_*) \cap \prod_{\mathbb{N}}^W \pi_1(X,x_*)$. Since $[f] \in j_*\mathcal{H}_1(U_1 \vee U_2, x_*)$, there exists $\tilde{f} :(\mathbb{HE}^1, \theta )\to (U_1  \vee U_2,x_*)$ such that $j_* [\tilde{f}] = [f]$, or equivalently, $j \circ \tilde{f} \simeq f$. Also, since $[f] \in \prod_{\mathbb{N}}^W \pi_1(X,x_*)$, $f$ can be assumed as a map with $f|_{\widetilde{\bigvee}_{k \geq K} \mathbb{S}_k^1} = C|_{\widetilde{\bigvee}_{k \geq K} \mathbb{S}_k^1}$ for some $K \in \mathbb{N}$. Hence $j \circ \tilde{f}|_{\widetilde{\bigvee}_{k \geq K} \mathbb{S}_k^1} \simeq C|_{\widetilde{\bigvee}_{k \geq K} \mathbb{S}_k^1}$. Using \cite[Lemma 2.2]{1}, one can replace $\tilde{f}$ with a map $\hat{f}$ such that $\hat{f}|_{\widetilde{\bigvee}_{k \geq K} \mathbb{S}_k^1} \simeq C|_{\widetilde{\bigvee}_{k \geq K} \mathbb{S}_k^1}$. Thus $[\hat{f}] \in \prod_{\mathbb{N}}^W \pi_1(U_1 \vee U_2,x_*)$ and then $[f] \in \prod_{\mathbb{N}}^W \pi_1(j)\pi_1(U_1 \vee U_2,x_*)$ which is trivial, because of the choice of $U_1 \vee U_2$.
Hence $j_*\mathcal{H}_1(U_1 \vee U_2, x_*) \cap \prod_{\mathbb{N}}^W \pi_1(X,x_*) = \langle e \rangle$, and the result holds by normality of $\prod_{\mathbb{N}}^W \pi_1(X,x_*)$ in $\mathcal{H}_1(X, x_*)$.
\end{proof}

By a similar argument, we can conclude the following result for the wedge sum of $1$-simply connected spaces.

\begin{theorem}\label{th4.2n}
Let $(X_1,x_1)$ and $(X_2, x_2)$ be two pointed spaces and $(X, x_*) = (X_1, x_1) \vee (X_2, x_2)$.  If   $\pi_1(X_1, x) = \langle e \rangle$ for some $x \in X_1 \setminus \{x_1\}$, then
\[
\mathcal{H}_1(X, x) \cong
\mathcal{H}_1(X_1, x) \ltimes {\prod_{\mathbb{N}}}^W  \pi_1(X,x).
\]
\end{theorem}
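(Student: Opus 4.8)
The plan is to run the same template as Theorem~\ref{th2.8n}, but feeding in part~(ii) of Lemma~\ref{le4.1} in place of part~(i). First I would invoke Lemma~\ref{le4.1}(ii) (which tacitly needs $\{x_1\}$ closed in $X_1$, so that $X_1\setminus\{x_1\}$ is open) to obtain the factorization
\[
\mathcal{H}_1(X, x) = \mathcal{H}_1(X_1, x)\, {\prod_{\mathbb{N}}}^W \pi_1(X, x).
\]
Since $\prod^W_{\mathbb{N}}\pi_1(X,x)$ is normal in $\mathcal{H}_1(X,x)$, and $\mathcal{H}_1(X_1,x)$ embeds as a subgroup via the inclusion $\iota:X_1\hookrightarrow X$ (because $X_1$ is a retract of $X$), the semidirect product decomposition reduces to proving that the two factors meet trivially, i.e.
\[
\mathcal{H}_1(X_1, x) \cap {\prod_{\mathbb{N}}}^W \pi_1(X, x) = \langle e \rangle.
\]

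For the intersection I would use the retraction $r:X\to X_1$ that collapses $X_2$ to $x_*$, which I find cleaner than quoting \cite[Lemma 2.2]{1}. Take $[f]$ in the intersection. Because $r$ preserves base points, $r_*$ carries any class represented by a map that is constant on $\widetilde{\bigvee}_{k\ge K}\mathbb{S}^1_k$ to another such class, so $r_*$ sends $\prod^W_{\mathbb{N}}\pi_1(X,x)$ into $\prod^W_{\mathbb{N}}\pi_1(X_1,x)$; thus $r_*[f]\in\prod^W_{\mathbb{N}}\pi_1(X_1,x)$. The hypothesis $\pi_1(X_1,x)=\langle e\rangle$ now forces $\prod^W_{\mathbb{N}}\pi_1(X_1,x)=\langle e\rangle$, since every element there is represented by a map constant outside a finite wedge of circles whose finitely many nonconstant loops are each nullhomotopic in $X_1$, hence nullhomotopic. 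Therefore $r_*[f]=e$. On the other hand, $r\circ\iota\simeq\mathrm{id}_{X_1}$ gives $\iota_*r_*=\mathrm{id}$ on $\mathcal{H}_1(X_1,x)$, so $[f]=\iota_*r_*[f]=e$. The identity $r_*\iota_*=\mathrm{id}$ also records that $\iota_*$ is injective, which justifies treating $\mathcal{H}_1(X_1,x)$ as a genuine subgroup in the first place.

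The argument the paper alludes to as ``similar'' would instead mirror Theorem~\ref{th2.8n} literally: from $[f]\in\mathcal{H}_1(X_1,x)$ pick $\tilde f:(\mathbb{HE}^1,\theta)\to(X_1,x)$ with $\iota\circ\tilde f\simeq f$, observe that $[f]\in\prod^W_{\mathbb{N}}\pi_1(X,x)$ makes $\iota\circ\tilde f$ nullhomotopic on the tail, apply \cite[Lemma 2.2]{1} to replace $\tilde f$ by $\hat f$ that is genuinely constant on the tail, and conclude $[f]=\iota_*[\hat f]\in\iota_*\prod^W_{\mathbb{N}}\pi_1(X_1,x)=\langle e\rangle$. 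Either way, the crux is identical.

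The main obstacle is precisely the transfer of tail information from $X$ back to $X_1$: the fact that $f$ dies on the tail \emph{in} $X$ does not immediately say it dies \emph{in} $X_1$, and this is exactly the gap bridged by the retraction $r$ (or by \cite[Lemma 2.2]{1}). Once the tail has been handled inside $X_1$, the collapse to $\langle e\rangle$ is automatic, being forced only by $\pi_1(X_1,x)=\langle e\rangle$ together with the fact that the weak product over a trivial group is trivial.
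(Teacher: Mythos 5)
Your proof is correct, and its skeleton --- the factorization from Lemma \ref{le4.1}(ii), triviality of the intersection, and normality of $\prod^W_{\mathbb{N}}\pi_1(X,x)$ --- is exactly what the paper intends when it says the result follows ``by a similar argument'' to Theorem \ref{th2.8n}; you were also right to flag the tacit hypothesis that $\{x_1\}$ is closed in $X_1$, which Lemma \ref{le4.1}(ii) requires but the statement of Theorem \ref{th4.2n} omits. Where you genuinely diverge is the key step. The paper's implicit argument mirrors Theorem \ref{th2.8n} literally: lift $[f]$ to $\tilde f$ landing in $X_1$, use \cite[Lemma 2.2]{1} to trade the tail for a constant, and conclude $[f]\in\prod^W_{\mathbb{N}}\pi_1(\iota)\pi_1(X_1,x)=\langle e\rangle$; this is precisely your second, alternative route. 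Your primary route instead exploits the retraction $r:X\to X_1$: $r_*$ carries $\prod^W_{\mathbb{N}}\pi_1(X,x)$ into $\prod^W_{\mathbb{N}}\pi_1(X_1,x)$, which is trivial once $\pi_1(X_1,x)=\langle e\rangle$, and then functoriality finishes the job. (One notational slip: $r\circ\iota=\mathrm{id}_{X_1}$ gives $r_*\iota_*=\mathrm{id}$, not $\iota_*r_*=\mathrm{id}$ on $\mathcal{H}_1(X_1,x)$; but the way you actually use it --- that $\iota_*r_*$ fixes every element of the subgroup $\iota_*\mathcal{H}_1(X_1,x)\subseteq\mathcal{H}_1(X,x)$ --- is the correct reading and is valid.) The retraction argument buys you something real: it avoids \cite[Lemma 2.2]{1} entirely, and the identity $r_*\iota_*=\mathrm{id}$ simultaneously certifies that $\iota_*$ is injective, so $\mathcal{H}_1(X_1,x)$ really sits inside $\mathcal{H}_1(X,x)$ as a subgroup --- a point Lemma \ref{le4.1}(ii) asserts but does not belabor. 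What it costs is uniformity: it works here only because $X_1$ is a retract of $X$ containing the base point $x$, structure that is absent in Theorem \ref{th2.8n}, where $U_1\vee U_2$ is merely an open neighbourhood of $x_*$; that is exactly why the paper routes both proofs through the lift-and-glue argument instead.
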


In the following two theorems, we reconstruct isomorphisms in Theorems \ref{th2.8n} and \ref{th4.2n} for $n \ge 2$. In this case, since all groups are abelian and  all subgroups are normal, semidirect product $\ltimes$ must be replaced by direct sum $\oplus$.

\begin{theorem}
Let $(X_1,x_1)$ and $(X_2, x_2)$ be two pointed spaces, $(X, x_*) = (X_1, x_1) \vee (X_2, x_2)$, and $n \ge 2$.  If  $X$ is semilocally $n$-simply connected at $x_*$, then
\[
\mathcal{H}_n(X,x_*) \cong j_* \mathcal{H}_n (U_1 \vee U_2) \oplus \bigoplus_{\mathbb{N}}  \pi_n(X,x_*),
\]
for some neighbourhood $U_1 \vee U_2$ of $x_*$ with the inclusion map $j : U_1 \vee U_2 \to X$.
\end{theorem}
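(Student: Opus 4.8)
The plan is to transcribe the proof of Theorem \ref{th2.8n} almost verbatim, tracking carefully the one place where abelianness upgrades the internal semidirect product to an internal direct sum. First I would apply Lemma \ref{le4.1}(i) to obtain
\[
\mathcal{H}_n(X,x_*) = j_* \mathcal{H}_n(U_1 \vee U_2, x_*)\, {\prod_{\mathbb{N}}}^W \pi_n(X,x_*),
\]
valid for \emph{every} open neighbourhood $U_1 \vee U_2$ of $x_*$, and then specialize to the neighbourhood furnished by semilocal $n$-simple connectivity, namely one for which the induced homomorphism $\pi_n(j):\pi_n(U_1 \vee U_2, x_*) \to \pi_n(X,x_*)$ is trivial. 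I would also note that for $n \ge 2$ the homotopy groups are abelian, so the weak (finite-support) direct product coincides with the direct sum, $\prod_{\mathbb{N}}^W \pi_n(X,x_*) = \bigoplus_{\mathbb{N}} \pi_n(X,x_*)$; this is what produces the $\oplus$ in the final line.

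The heart of the argument is the triviality of the intersection of the two factors. Take $[f] \in j_*\mathcal{H}_n(U_1\vee U_2, x_*) \cap \prod_{\mathbb{N}}^W \pi_n(X, x_*)$. Membership in the first factor yields $\tilde f : (\mathbb{HE}^n, \theta) \to (U_1 \vee U_2, x_*)$ with $j \circ \tilde f \simeq f$, while membership in the second lets me assume $f$ is constant on some tail $\widetilde{\bigvee}_{k \ge K} \mathbb{S}_k^n$. Combining these, $j \circ \tilde f$ is nullhomotopic on that tail, so by \cite[Lemma 2.2]{1} I can replace $\tilde f$ by a representative $\hat f$ that is itself constant on the tail. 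Hence $[\hat f] \in \prod_{\mathbb{N}}^W \pi_n(U_1 \vee U_2, x_*)$, and $[f] = j_*[\hat f]$ lies in $\prod_{\mathbb{N}}^W \pi_n(j)\pi_n(U_1 \vee U_2, x_*)$, which is trivial by the choice of $U_1 \vee U_2$. Thus the intersection reduces to $\langle e \rangle$.

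To conclude, since $n \ge 2$ the group $\mathcal{H}_n(X,x_*)$ and all of its subgroups are abelian by \cite[Theorem 2.3]{1}, so both factors are normal; two normal subgroups that jointly generate the whole group and intersect trivially assemble into an internal direct sum, giving
\[
\mathcal{H}_n(X,x_*) \cong j_* \mathcal{H}_n(U_1 \vee U_2) \oplus \bigoplus_{\mathbb{N}} \pi_n(X,x_*).
\]
The only genuine subtlety, and the step I would check most carefully, is the applicability of \cite[Lemma 2.2]{1} in the range $n \ge 2$: I need the ``tail surgery'' that converts a map nullhomotopic on a tail into one literally constant on that tail to hold for higher-dimensional Hawaiian earrings, not merely for $n=1$. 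Everything else is a mechanical replacement of $\ltimes$ by $\oplus$ once normality is in hand.
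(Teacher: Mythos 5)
Your proposal is correct and coincides with the paper's intended argument: the paper gives no separate proof of this theorem, remarking only that the proof of Theorem \ref{th2.8n} is to be reconstructed with $\ltimes$ replaced by $\oplus$ because for $n \ge 2$ all groups involved are abelian (so both factors are normal and ${\prod_{\mathbb{N}}}^W \pi_n(X,x_*)$ coincides with $\bigoplus_{\mathbb{N}} \pi_n(X,x_*)$), which is precisely your transcription. The one step you flagged is harmless: \cite[Lemma 2.2]{1} is formulated for the $n$-dimensional Hawaiian earring for arbitrary $n \ge 1$, and the paper itself performs the analogous tail manipulation for general $n$ in the proof of Theorem \ref{th2.5n}.
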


%

\begin{theorem}\label{th4.4}
Let $(X_1,x_1)$ and $(X_2, x_2)$ be two pointed spaces,  $(X, x_*) = (X_1, x_1) \vee (X_2, x_2)$, and $n \geq 2$. If $\pi_n(X_1, x) = \langle e \rangle$ for some $x_1 \neq x \in X_1$, then
\[
\mathcal{H}_n(X, x) \cong
\mathcal{H}_n(X_1, x) \oplus \bigoplus_{\mathbb{N}} \pi_n(X,x) .
\]
\end{theorem}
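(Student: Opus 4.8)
The plan is to mirror the proof of Theorem~\ref{th2.8n}, but to start from part (ii) of Lemma~\ref{le4.1} instead of part (i), and then to exploit that for $n\ge 2$ everything in sight is abelian, so that an internal product with trivial intersection is automatically a direct sum. First I would invoke Lemma~\ref{le4.1}(ii) — which needs $\{x_1\}$ closed in $X_1$, a point I take to be part of the standing hypothesis — to write
\[
\mathcal{H}_n(X,x)=\mathcal{H}_n(X_1,x)\,{\prod_{\mathbb{N}}}^W\pi_n(X,x),
\]
where the first factor is really the image $i_*\mathcal{H}_n(X_1\setminus\{x_1\},x)$ sitting inside $\mathcal{H}_n(X_1,x)\le\mathcal{H}_n(X,x)$. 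Since $n\ge2$, the group $\mathcal{H}_n(X,x)$ is abelian by \cite[Theorem~2.3]{1}, so both factors are normal and the weak direct product $\prod_{\mathbb{N}}^W\pi_n(X,x)$ coincides with the direct sum $\bigoplus_{\mathbb{N}}\pi_n(X,x)$. Hence to upgrade the product above to a direct sum it suffices to check that the two factors meet only in the identity.

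The substance of the argument is this intersection computation. Given $[f]\in\mathcal{H}_n(X_1,x)\cap\prod_{\mathbb{N}}^W\pi_n(X,x)$, membership in the first factor furnishes a representative $\tilde f:(\mathbb{HE}^n,\theta)\to(X_1,x)$ with $i\circ\tilde f\simeq f$, while membership in the second lets me assume $f$, and hence $i\circ\tilde f$, is constant on a tail $\widetilde{\bigvee}_{k\ge K}\mathbb{S}_k^n$. Exactly as in Theorem~\ref{th2.8n}, I would then apply \cite[Lemma~2.2]{1} to replace $\tilde f$ by a map $\hat f:(\mathbb{HE}^n,\theta)\to(X_1,x)$ that is itself constant on that tail and still represents $[f]$. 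Consequently $[\hat f]$ is represented by a map into $X_1$ that vanishes on a tail, that is, $[\hat f]\in\prod_{\mathbb{N}}^W\pi_n(X_1,x)$. But the hypothesis $\pi_n(X_1,x)=\langle e\rangle$ makes this weak product trivial, so $[\hat f]=e$ and therefore $[f]=i_*[\hat f]=e$.

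Combining the decomposition of Lemma~\ref{le4.1}(ii) with this trivial intersection yields
\[
\mathcal{H}_n(X,x)\cong\mathcal{H}_n(X_1,x)\oplus\bigoplus_{\mathbb{N}}\pi_n(X,x),
\]
as claimed. I expect the delicate point to be the tail–replacement step: one must be certain that the map $\hat f$ produced by \cite[Lemma~2.2]{1} can be taken with image still in $X_1$ (equivalently in $X_1\setminus\{x_1\}$), rather than merely in the ambient $X$, so that its class genuinely lands in $\prod_{\mathbb{N}}^W\pi_n(X_1,x)$ where the hypothesis $\pi_n(X_1,x)=\langle e\rangle$ bites. Confirming that the construction underlying Lemma~2.2 respects the subspace $X_1$ is the crux; the remaining bookkeeping — normality of the factors and the identification of the weak product with the direct sum — is routine once $n\ge2$ forces the ambient Hawaiian group to be abelian.
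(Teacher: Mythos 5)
Your proposal is correct and coincides with the paper's own (implicit) argument: the paper offers no separate proof of Theorem~\ref{th4.4}, saying only that it follows by the argument of Theorem~\ref{th2.8n} with $\ltimes$ upgraded to $\oplus$ since everything is abelian for $n\ge 2$, which is precisely your route via Lemma~\ref{le4.1}(ii), the trivial-intersection computation using \cite[Lemma 2.2]{1}, and the hypothesis $\pi_n(X_1,x)=\langle e\rangle$. The two points you flag resolve themselves exactly as in the paper's $n=1$ proof: closedness of $\{x_1\}$ in $X_1$ is indeed a tacit hypothesis inherited from Lemma~\ref{le4.1}(ii), and the replacement map $\hat{f}$ does stay in $X_1$, since it is just $\tilde{f}$ redefined to be constant on the tail $\widetilde{\bigvee}_{k \geq K}\mathbb{S}_k^n$, with \cite[Lemma 2.2]{1} guaranteeing that this finite modification still satisfies $i_*[\hat{f}]=[f]$.
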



\v{Z}. Virk \cite{Vir} defined small $1$-loop and studied  small loop spaces. Note that a nullhomotopic loop is a small loop.
H. Passandideh and F.H. Ghane \cite{PasGha}  defined and studied the notions of $n$-homotopically Hausdorffness and small $n$-loops, for $n \ge 2$.  An $n$-loop $\alpha: (\mathbb{S}^n, 1) \to (X, x)$ is called small if it has a homotopic representative in every open neighbourhood of $x$.

\begin{theorem}\label{th2.5n}
Let $(X_1,x_1)$ and $(X_2, x_2)$ be two pointed spaces, $(X, x_*) = (X_1, x_1) \vee (X_2, x_2)$, and $n \ge 1$. Also, let $\{x_1\}$ be a  closed subset in $X_1$, and $x_1 \neq x\in X_1$.
\begin{enumerate}[(i)]
\item
 All $n$-loops in $X$ at $x_*$ are small if and only if
\begin{equation}\label{eq3.6.1}
\mathcal{H}_n(X, x_*) = j_* \mathcal{H}_n( U_1 \vee U_2, x_*),
\end{equation}
for any neighbourhoods $U_1$ and $U_2$ of $x_1$ and $x_2$ in $X_1$ and $X_2$, respectively, when $j:U_1 \vee U_2 \to X$ is the inclusion map.

\item
 If all $n$-loops in $X$ at $x$ are small, then
\begin{equation}\label{eq3.6.2}
\mathcal{H}_n(X, x) =
\mathcal{H}_n(X_1, x).
\end{equation}
\end{enumerate}
\end{theorem}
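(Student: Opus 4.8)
The plan is to derive both parts directly from Lemma~\ref{le4.1}. That lemma already decomposes the Hawaiian group as a product of the relevant subgroup ($j_*\mathcal{H}_n(U_1\vee U_2,x_*)$ in part~(i), $\mathcal{H}_n(X_1,x)$ in part~(ii)) with the weak factor $\prod_{\mathbb{N}}^W\pi_n(X,\cdot)$. So the whole content of the theorem is that smallness of $n$-loops is exactly the condition that makes this weak factor redundant, and---for the equivalence in~(i)---conversely.

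For part~(i) I would argue by double implication. First, assume every $n$-loop at $x_*$ is small. By Lemma~\ref{le4.1}(i), $\mathcal{H}_n(X,x_*)=j_*\mathcal{H}_n(U_1\vee U_2,x_*)\,\prod_{\mathbb{N}}^W\pi_n(X,x_*)$, so it suffices to prove $\prod_{\mathbb{N}}^W\pi_n(X,x_*)\subseteq j_*\mathcal{H}_n(U_1\vee U_2,x_*)$. Given $[f]$ in the weak factor, I would invoke \cite[Lemma~2.2]{1} to assume that $f$ is constant off finitely many spheres; each nonconstant restriction $f|_{\mathbb{S}_k^n}$ is then a small $n$-loop, hence homotopic to some $g_k$ with image in $U_1\vee U_2$. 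Splicing these finitely many homotopies with the constant homotopy on the remaining spheres produces $\tilde f\colon(\mathbb{HE}^n,\theta)\to(U_1\vee U_2,x_*)$ with $j\circ\tilde f\simeq f$, so $[f]=j_*[\tilde f]$; since this works for arbitrary $U_1,U_2$, equation~\eqref{eq3.6.1} follows. Conversely, assuming \eqref{eq3.6.1} for all $U_1,U_2$, I would fix an $n$-loop $\alpha$ at $x_*$ and a neighbourhood $V$, choose $U_1,U_2$ with $U_1\vee U_2\subseteq V$, and take $f$ constant off $\mathbb{S}_1^n$ with $f|_{\mathbb{S}_1^n}=\alpha$. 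The hypothesis gives $\tilde f\colon(\mathbb{HE}^n,\theta)\to(U_1\vee U_2,x_*)$ with $j\circ\tilde f\simeq f$, and restricting the homotopy to $\mathbb{S}_1^n$ exhibits $\alpha\simeq j\circ\tilde f|_{\mathbb{S}_1^n}$ with image in $U_1\vee U_2\subseteq V$. As $V$ was arbitrary, $\alpha$ is small.

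For part~(ii) I would run the same absorption argument, starting from $\mathcal{H}_n(X,x)=\mathcal{H}_n(X_1,x)\,\prod_{\mathbb{N}}^W\pi_n(X,x)$ of Lemma~\ref{le4.1}(ii). The geometric point is that, because $x\neq x_1$ and $\{x_1\}$ is closed, $X_1\setminus\{x_1\}$ is an open neighbourhood of $x$ in $X$ that lies inside $X_1$. Hence, for $[f]\in\prod_{\mathbb{N}}^W\pi_n(X,x)$ represented as above, smallness lets me replace each of the finitely many nonconstant restrictions by a homotopic representative with image in $X_1\setminus\{x_1\}\subseteq X_1$. Splicing again gives $\tilde f\colon\mathbb{HE}^n\to X_1$ whose inclusion into $X$ is homotopic to $f$, so $[f]\in\mathcal{H}_n(X_1,x)$ under the identification arising from the retraction $X\to X_1$. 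This absorbs the weak factor and yields \eqref{eq3.6.2}.

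The two group-theoretic reductions are handed to us by Lemma~\ref{le4.1}, so the step that will need the most care is the splicing of homotopies on $\mathbb{HE}^n$: I must confirm that combining finitely many basepoint-preserving homotopies on $\mathbb{S}_1^n,\dots,\mathbb{S}_m^n$ with the constant homotopy on the cofinitely many remaining spheres is genuinely continuous as a map $\mathbb{HE}^n\times I\to X$. This holds precisely because only finitely many spheres move while the rest stay at the base point, so continuity at $\theta$ is automatic; the preliminary reduction to maps that are literally constant off finitely many spheres, via \cite[Lemma~2.2]{1}, is the other point I would state explicitly before splicing.
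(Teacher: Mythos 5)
Your proposal is correct and takes essentially the same approach as the paper: both arguments reduce via Lemma~\ref{le4.1} to absorbing the weak factor $\prod_{\mathbb{N}}^W\pi_n$ into $j_*\mathcal{H}_n(U_1\vee U_2,x_*)$ (resp.\ into $\mathcal{H}_n(X_1,x)$ through the open set $X_1\setminus\{x_1\}$) by homotoping the finitely many nonconstant sphere restrictions into the neighbourhood using smallness, and both prove the converse in (i) by testing the equality on a map supported on $\mathbb{S}_1^n$. Your added care about the continuity of the spliced homotopy and about choosing $U_1\vee U_2\subseteq V$ in the converse only makes explicit what the paper leaves implicit.
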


\begin{proof}
\begin{enumerate}[(i)]
\item
Let $U_1$ and $U_2$ be two arbitrary neighbourhoods of $x_1$ and $x_2$, respectively, and let $[f] \in \prod_{\mathbb{N}}^W \pi_n(X,x_*)$.  We show that $[f] \in  j_* \mathcal{H}_n( U_1 \vee U_2, x_*)$, and then the equality \eqref{eq3.6.1} is obtained by Lemma \ref{le4.1}.
Since $[f] \in \prod_{\mathbb{N}}^W \pi_n(X,x_*)$, one can consider $f$ as $f|_{\widetilde{\bigvee}_{k \geq K} \mathbb{S}_k^n} = C|_{\widetilde{\bigvee}_{k \geq K} \mathbb{S}_k^n}$ for some $K \in \mathbb{N}$. Moreover, since each $n$-loop in $X$ is small at $x_*$, any $n$-loop $\alpha$ is homotopic to some $n$-loop in $U_1 \vee U_2$, say $\tilde{\alpha}:\mathbb{S}^n \to U_1 \vee U_2$, such that $j \circ \tilde{\alpha} \simeq \alpha$. By induction on finite join of $n$-loops,
 $f|_{\widetilde{\bigvee}_{k < K} \mathbb{S}_k^n}$ has a homotopic representative $\tilde{f}$ in $U_1 \vee U_2$ with $j \circ \tilde{f}  \simeq f|_{\widetilde{\bigvee}_{k < K}\mathbb{S}_k^n}$. Therefore,  $[f] \in  j_* \mathcal{H}_n( U_1 \vee U_2, x_*)$.

Conversely, let $\alpha$ be an $n$-loop in $X$ at $x_*$. Consider the map $f:(\mathbb{HE}^n, \theta) \to (X,x_*)$ so that $f|_{\mathbb{S}_1^n} = \alpha$ and $f|_{\mathbb{S}_k^n} = c$ for $k >1$. Then $[f] \in j_*\mathcal{H}_n(U_1\vee U_2, x_*)$ by equality \eqref{eq3.6.1} for any neighbourhoods $U_1$ and $U_2$. Let $[\tilde{f}]$ be the element of $\mathcal{H}_n(U_1\vee U_2, x_*)$ such that $j \circ \tilde{f} = f$. Hence, $j \circ \tilde{f}|_{\mathbb{S}_1^n} = f|_{\mathbb{S}_1^n} = \alpha$, that is $\alpha$ is homotopic to some $n$-loop in $U_1 \vee U_2$. Since $U_1$ and $U_2$ are arbitrary neighbourhoods, $\alpha$ is a small $n$-loop.

\item
Since $X_1 \setminus \{x_1\}$ is open and all $n$-loops at $x$ in $X$  are small, similar to the proof of the previous part, one can show that $\prod_{\mathbb{N}}^W \pi_n(X,x) \subseteq i_*\mathcal{H}_n(X_1 \setminus \{x_1\}, x)$, where $i: X_1 \setminus \{x_1\} \to X$ is the inclusion map. Moreover, $i_*\mathcal{H}_n(X_1 \setminus \{x_1\}, x)$ is contained in $\mathcal{H}_n(X_1, x)$ as a subgroup. Thus, by Lemma \ref{le4.1}, the equality \eqref{eq3.6.2} holds.
\end{enumerate}
\end{proof}

Since a nullhomotopic $n$-loop is a special case of small $n$-loop, the equalities \eqref{eq3.6.1} and \eqref{eq3.6.2} hold for $n$-simply connected spaces.
%
%
%
%
%
For example, Theorem \ref{th2.5n} holds for two cones joining at their vertices which is not only $n$-simply connected, but also contractible. Recall that the $1$th Griffiths space, the wedge sum of two cones, is not contractible, even more Griffiths \cite{Gri} proved that it is not $1$-simply connected.

\begin{example}\label{ex3.9n}
For a space $X$, put $Y= \frac{X\times [-1,1]}{X \times \{0\}}$  the wedge sum of two cones over $X$ at their vertices  and $\tilde{x}_t= [(x_0, t)]$ for $t \neq 0$. One can see that $Y$ is contractible at the common point, and hence, it is $n$-simply connected. By Theorem \ref{th2.5n}, $\mathcal{H}_n(Y, \tilde{x}_t) = \mathcal{H}_n(CX , \tilde{x}_t)$.
Also, by \cite[Theorem 2.13]{1},
$\mathcal{H}_n(Y, x_0) \cong \frac{\mathcal{H}_n(X, x_0)}{\prod^W_{k \in \mathbb{N}} \pi_n(X, x_0)}$.

Let $x_*$ be the common vertex of the two cones, then $\mathcal{H}_n(Y, x_*)$ is trivial, for $Y$ is semilocally strongly contractible at $x_*$ and by Theorem \ref{th2.6n}, $\mathcal{H}_n(Y, x_*) = \prod_{\mathbb{N}}^W \pi_n(Y, x_*)$ which is trivial.
\end{example}

The following example reveals that Theorem \ref{th2.5n} does not hold, if some non small loop exists.

\begin{example}
Let $(X, x_*)= (C(\mathbb{HE}^1),\theta) \vee (\mathbb{S}^1, 1)$ (see Figure \ref{fi2}) .
If one assumes that $\mathcal{H}_1(X, x_*) = \mathcal{H}_1(C(\mathbb{HE}^1), x_*)$, then the simple $1$-loop in $\mathbb{S}^1$ must be nullhomotopic which is a contradiction.

\begin{figure}[!ht]
\centering
\includegraphics[scale=.5]{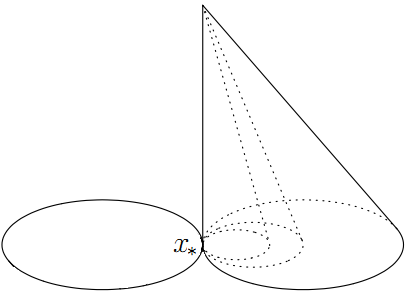}
\caption{The wedge sum of a circle and a cone on the Hawaiian earring}
\label{fi2}
\end{figure}
\end{example}

\section{Hawaiian Groups of Griffiths Spaces}
In this section, by generalizing the Griffiths space to higher dimensions and applying the results of Sections 2 and 3, we study the $n$th Hawaiian group of the $n$th Griffiths space, for $n \ge 1$.

Eda \cite{Eda} introduced the free $\sigma$-product $\times^{\sigma}_{\mathbb{N}} \mathbb{Z}$ as the group consisting of all reduced $\sigma$-words, and then proved that it is isomorphic to $\pi_1(\mathbb{HE}^1, \theta)$ \cite[Theorem A.1]{Eda}. To prove, Eda remarked that each $1$-loop in the $1$-dimensional Hawaiian earring is homotopic to some proper $1$-loop \cite[Lemma A.3]{Eda}. A $1$-loop $\alpha: (\mathbb{I}, \dot{\mathbb{I}}) \to (\mathbb{HE}^1, \theta)$ is called proper whenever for each subinterval $[a,b] \subseteq \mathbb{I}$, if $\alpha|_{[a,b]}$ is nullhomotopic, then it is constant.

Also, O. Bogopolski  and A. Zastrow  proved  that
\[
\pi_1(\mathcal{G}_1, a) \cong \frac{\times^{\sigma}_{\mathbb{N}} \mathbb{Z}}{\overline{\langle \times^{\sigma}_{\mathbb{N}_e} \mathbb{Z}, \times^{\sigma}_{\mathbb{N}_o} \mathbb{Z}\rangle}^N},
\]
where $\mathbb{N}_o$ and $\mathbb{N}_e$ denote the set of odd and even numbers, respectively \cite[Theorem 3.4]{BogZas}.  This isomorphism is induced by the natural embedding $\iota:\mathbb{HE}^1 \to \mathcal{G}_1$ causing epimorphism $\pi_1(\iota): \times^{\sigma}_{\mathbb{N}} \mathbb{Z} \to \pi_1(\mathcal{G}_1, x_*)$ together with $\overline{\langle \times^{\sigma}_{\mathbb{N}_e} \mathbb{Z}, \times^{\sigma}_{\mathbb{N}_o} \mathbb{Z}\rangle}^N$ as its kernel.

The following lemma gives a useful description of the group $\mathcal{H}_1(\mathbb{HE}^1, \theta)$ which is used in sequel.

\begin{lemma}\label{le4.1nn}
Let $\mathcal{B}$ be the subgroup of $\prod_{\mathbb{N}} \times^{\sigma}_{\mathbb{N}} \mathbb{Z}$ consisting of all countably infinite tuples of reduced $\sigma$-words that the number of components including letter of type $m$ is finite, for all $m \in \mathbb{N}$. Then
\[
\mathcal{H}_1(\mathbb{HE}^1, \theta) \cong \mathcal{B}.
\]
\end{lemma}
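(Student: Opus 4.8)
The plan is to realize the isomorphism through the canonical homomorphism $\varphi$ of (I), which here takes the form
$\varphi:\mathcal{H}_1(\mathbb{HE}^1,\theta)\to\prod_{\mathbb{N}}\pi_1(\mathbb{HE}^1,\theta)=\prod_{\mathbb{N}}{\times^{\sigma}_{\mathbb{N}}}\mathbb{Z}$,
sending $[f]$ to the tuple $(w_1,w_2,\dots)$ with $w_k=[f|_{\mathbb{S}^1_k}]$, and to prove that $\varphi$ is a group isomorphism onto $\mathcal{B}$. Thus three things must be checked: that $\mathrm{im}\,\varphi\subseteq\mathcal{B}$, that $\varphi$ maps onto $\mathcal{B}$, and that $\varphi$ is injective. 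The first two are constructive; the last is the crux.

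First I would verify $\mathrm{im}\,\varphi\subseteq\mathcal{B}$, i.e. the finite-type condition. Fix $m$ and let $p_m$ be the point of the target circle $\mathbb{S}^1_m$ antipodal to $\theta$. Removing $p_m$ leaves an open set which deformation retracts rel $\theta$ onto $\bigcup_{j\neq m}\mathbb{S}^1_j$, so if $p_m\notin f(\mathbb{S}^1_k)$ then $w_k$ contains no letter of type $m$. Hence each $k$ whose word $w_k$ carries a type-$m$ letter satisfies $p_m\in f(\mathbb{S}^1_k)$; if this held for infinitely many $k$, picking $x_k\in\mathbb{S}^1_k$ with $f(x_k)=p_m$ would force $x_k\to\theta$ in the domain (the domain circles shrink to $\theta$) while $f(x_k)=p_m\neq\theta$, contradicting continuity of $f$ at $\theta$. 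Therefore only finitely many $w_k$ carry a letter of type $m$, which is exactly the defining property of $\mathcal{B}$.

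For surjectivity onto $\mathcal{B}$, given $(w_1,w_2,\dots)\in\mathcal{B}$ I would represent each $w_k$ by a loop $\alpha_k$ whose image lies in the sub-earring $\bigcup_{m\in T_k}\mathbb{S}^1_m$ spanned by the letter-types $T_k$ actually occurring in $w_k$ (possible by Eda's description of reduced $\sigma$-words), and set $f|_{\mathbb{S}^1_k}=\alpha_k$. Continuity of $f$ away from $\theta$ is clear, and continuity at $\theta$ follows from the finite-type condition: a basic neighbourhood of $\theta$ contains every circle $\mathbb{S}^1_m$ with $m\geq M$, and since only finitely many $k$ have $T_k\cap\{1,\dots,M-1\}\neq\emptyset$, all but finitely many $\alpha_k$ already land in that neighbourhood, the finitely many exceptions being absorbed by shrinking the arc parts on the low circles. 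By construction $\varphi([f])=(w_1,w_2,\dots)$.

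The injectivity of $\varphi$ is the main obstacle. Suppose $\varphi([f])=(e,e,\dots)$, so every $f|_{\mathbb{S}^1_k}$ is nullhomotopic, and I must produce a based nullhomotopy of $f$ itself. My plan is to use the retractions $r_N:\mathbb{HE}^1\to E_N$ onto the finite rose $E_N=\bigvee_{m=1}^N\mathbb{S}^1_m$ that collapses the circles of index $>N$; these induce the letter-truncation maps $\rho_N:{\times^{\sigma}_{\mathbb{N}}}\mathbb{Z}\to\pi_1(E_N)$ and fit into a square commuting with $\varphi$ and with the analogue of $\varphi$ for $E_N$. Since $E_N$ is a finite wedge of circles, Corollary \ref{co2.10n} together with Theorem \ref{th2.6n} gives $\mathcal{H}_1(E_N,\theta)\cong{\prod_{\mathbb{N}}}^W\pi_1(E_N)$ realised by that map, so it is injective; hence $\varphi([f])=(e,\dots)$ forces $(r_N)_*[f]=e$, i.e. $r_N\circ f$ is nullhomotopic for every $N$. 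The genuinely delicate step is to pass from the family of nullhomotopies of the $r_N\circ f$ to a single nullhomotopy of $f$, and this is where the one-dimensionality of $\mathbb{HE}^1$ enters. I would control it by first homotoping $f$ so that each $f|_{\mathbb{S}^1_k}$ is the proper (reduced) representative of $w_k$ in the sense of Eda's Lemma A.3 --- which, since $w_k=e$, is the constant loop --- and then showing that the finite-type information lets these reductions be performed with images shrinking to $\theta$, so that they assemble into a single continuous homotopy collapsing $f$ to the constant map. Arguing that the infinitely many reductions fit together continuously, rather than performing any one of them, is the technical heart of the proof.
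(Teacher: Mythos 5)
Your overall strategy --- realizing the isomorphism by the canonical homomorphism $\varphi$ of (I) and characterizing its image --- is sound, and two of your three steps are correct and complete: the antipodal-point/continuity argument showing $\mathrm{im}\,\varphi\subseteq\mathcal{B}$, and the explicit construction showing every element of $\mathcal{B}$ is hit. These two steps amount to the same combinatorial translation (via Eda's reduced $\sigma$-words and proper loops) that constitutes the paper's actual work in proving Lemma \ref{le4.1nn}.

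The genuine gap is injectivity of $\varphi$, which you explicitly defer (``arguing that the infinitely many reductions fit together continuously \ldots{} is the technical heart of the proof'') and never carry out. This is not a routine detail that can be left as a plan: injectivity of $\varphi$ is a strongly space-specific fact, and it fails badly in general --- for the cone $C\mathbb{HE}^1$ every $1$-loop is nullhomotopic yet $\mathcal{H}_1(C\mathbb{HE}^1,\theta)$ is uncountable, as recalled in the introduction --- precisely because individual nullhomotopies of the restrictions $f|_{\mathbb{S}^1_k}$ need not have images shrinking to the base point, and hence need not assemble into a single continuous based homotopy. Your finite-rose detour does not reduce the difficulty either: passing from ``$r_N\circ f$ is nullhomotopic for every $N$'' to ``$f$ is nullhomotopic'' is the $\mathcal{H}_1$-analogue of the (nontrivial) injectivity of $\pi_1(\mathbb{HE}^1,\theta)\to\varprojlim_N\pi_1(E_N)$, so it is essentially as hard as the statement you set out to prove. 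The paper closes exactly this hole by quoting \cite[Theorem 2.9]{1}, which asserts that $\mathcal{H}_1(\mathbb{HE}^1,\theta)$ embeds via $\varphi$ into $\prod_{\mathbb{N}}\pi_1(\mathbb{HE}^1,\theta)$ with image the sequences of classes admitting representatives converging uniformly to the constant loop; granted that theorem, only your first two steps remain, and they become the paper's proof. So either invoke that result, or actually supply the assembly argument (nullhomotopies with images shrinking to $\theta$, using one-dimensionality of $\mathbb{HE}^1$ and a version of Eda's Lemma A.3 that controls the image of the reducing homotopies); as written, the crux of the lemma is asserted rather than proved.
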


\begin{proof}
By \cite[Theorem 2.9]{1}, $\mathcal{H}_1(\mathbb{HE}^1, \theta)$ is isomorphic to the subgroup of $\prod_{\mathbb{N}} \pi_1(\mathbb{HE}^1, \theta)$ consisting of all sequences of homotopy classes of $1$-loops with some representative converging uniformly to the constant $1$-loop. We show that $\mathcal{B}$ equals this subgroup.

Let $\{U_l;\ l \in \mathbb{N}\}$ be the local basis at $\theta$ defined in the proof of \cite[Theorem 2.9]{1}, and also let $\{[f_k]\} \in \prod_{\mathbb{N}} \pi_1(\mathbb{HE}^1, \theta)$.  Then $\{f_k\}$ converges uniformly to the constant $1$-loop if and only if  for each $l \in \mathbb{N}$, there exists $K_l \in \mathbb{N}$ such that $im(f_k) \subseteq U_l$ whenever $k \ge K_l$. Assume that $f_k$ is the corresponding proper representative for all $k \in \mathbb{N}$. The image of $f_k$ is contained in $U_l$ if and only if $im(f_k)  \cap \mathbb{S}_m^1= \{\theta\}$, where $(m <l)$, for $U \cap \mathbb{S}^1_m$ is contractible and $f_k$ has no trivial subpath. Therefore, $\{f_k\}$ converges uniformly to the constant $1$-loop if and only if  for each $l \in \mathbb{N}$, there exists $K_l \in \mathbb{N}$ such that $im(f_k)  \cap \mathbb{S}_m^1= \{\theta\}$ whenever $k \ge K_l$.

By \cite[Theorem A.1]{Eda}, $\pi_1(\mathbb{HE}^1, \theta) \cong  \times^{\sigma}_{\mathbb{N}} \mathbb{Z}$, the group of reduced $\sigma$-words. Moreover, in a given reduced $\sigma$-word, the letter of type $m$ exists if and only if the $m$th circle $\mathbb{S}_m^1$ of $\mathbb{HE}^1$ appears in the corresponding proper $1$-loop. Therefore $\mathcal{H}_1(\mathbb{HE}^1, \theta)$ is isomorphic to the subgroup of $\prod_{\mathbb{N}} \times^{\sigma}_{\mathbb{N}} \mathbb{Z}$  consisting of all countably infinite tuples of reduced $\sigma$-words such that the number of components including letter of type $m$ is finite, for all $m \in \mathbb{N}$.
\end{proof}

The following theorem investigates the structure of the $1$th Hawaiian group of the $1$th Griffiths space  at the common point, the two vertices and the other points. Let $i:\mathbb{HE}^1  \to \mathcal{G}_1 \setminus \{v_1, v_2\}$ be the embedding which maps $(2m-1)$th circle onto the the horizontal left $m$th circle and maps $2m$th circle onto the horizontal right $m$th  circle, for $m \in \mathbb{N}$ (see Figure \ref{fi1}).  Also, let $j: \mathcal{G}_1 \setminus \{v_1, v_2\} \to \mathcal{G}$ be the inclusion map . 

\begin{theorem}\label{th4.3n}
Let $\mathcal{G}_1$ be the $1$th Griffiths space, $x_*$ the common point, $a \in A \cup A'$, and $x \in \mathcal{G}_1 \setminus (A \cup A' \cup \{x_*\})$. Then
\begin{align}
& \mathrm{(i)}& \mathcal{H}_1(\mathcal{G}_1, x_*) & \cong \frac{\mathcal{H}_1(\mathbb{HE}^1, \theta)}{i_*^{-1} \ker j_*} \cong \frac{\mathcal{B}}{\prod_{\mathbb{N}}^W\overline{\langle \times^{\sigma}_{\mathbb{N}_e} \mathbb{Z}, \times^{\sigma}_{\mathbb{N}_o} \mathbb{Z}\rangle}^N}, \label{eq1.2}
\\
&\mathrm{(ii)}& \mathcal{H}_1(\mathcal{G}_1, a)
& \cong \frac{{\mathcal{H}}_1(\mathbb{HE}^1,\theta)}{\prod_{\mathbb{N}}^W{\pi}_1(\mathbb{HE}^1,\theta)} \ltimes {\prod_{\mathbb{N}}}^W  \pi_1(\mathcal{G}_1, a) \cong
\frac{\mathcal{B}}{\prod^W_{\mathbb{N}} \times^{\sigma}_{\mathbb{N}} \mathbb{Z}} \ltimes {\prod_{\mathbb{N}}}^W  \frac{\times^{\sigma}_{\mathbb{N}} \mathbb{Z}}{\overline{\langle \times^{\sigma}_{\mathbb{N}_e} \mathbb{Z}, \times^{\sigma}_{\mathbb{N}_o} \mathbb{Z}\rangle}^N}, \label{eq3.1}
\\
&\mathrm{(iii)}& \mathcal{H}_1( \mathcal{G}_1 , x) &\cong {\prod_{\mathbb{N}}}^W \frac{\times^{\sigma}_{\mathbb{N}} \mathbb{Z}}{\overline{\langle \times^{\sigma}_{\mathbb{N}_e} \mathbb{Z}, \times^{\sigma}_{\mathbb{N}_o} \mathbb{Z}\rangle}^N}.\label{eq4.10n}
\end{align}
\end{theorem}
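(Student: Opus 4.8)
The plan is to treat the three base points separately, in increasing order of difficulty, always exploiting the decomposition $\mathcal{G}_1 = C_1 \vee C_2$ of the Griffiths space into two cones $C_i = C\mathbb{HE}^1$ glued at the bad point $x_*$, together with the Bogopolski--Zastrow presentation $\pi_1(\mathcal{G}_1,x_*) \cong \times^{\sigma}_{\mathbb{N}} \mathbb{Z} / \overline{\langle \times^{\sigma}_{\mathbb{N}_e} \mathbb{Z}, \times^{\sigma}_{\mathbb{N}_o} \mathbb{Z}\rangle}^N$ \cite{BogZas}, Eda's presentation $\pi_1(\mathbb{HE}^1,\theta) \cong \times^{\sigma}_{\mathbb{N}} \mathbb{Z}$ \cite{Eda}, and Lemma~\ref{le4.1nn}. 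For \eqref{eq4.10n} I would first observe that at any $x \notin A \cup A' \cup \{x_*\}$ the space $\mathcal{G}_1$ is semilocally strongly contractible at $x$: indeed it is locally strongly contractible there, strongly at the two vertices and at every remaining point off the meridians, where a small neighbourhood is contractible relative to the point. Picking such a $U$, the inclusion $U \hookrightarrow \mathcal{G}_1$ is nullhomotopic rel $x$, so $j_*\mathcal{H}_1(U,x)$ is trivial and Note~\ref{le3.2n} gives $\mathcal{H}_1(\mathcal{G}_1,x) = \prod_{\mathbb{N}}^W \pi_1(\mathcal{G}_1,x)$; substituting $\pi_1(\mathcal{G}_1,x) \cong \pi_1(\mathcal{G}_1,x_*)$ (path-connectedness) and the Bogopolski--Zastrow group finishes \eqref{eq4.10n}.

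For \eqref{eq3.1} I would take $a \in A$ (the case $a \in A'$ being symmetric), regard $a$ as a point of $C_1 \setminus \{x_*\}$, and use that $C_1 = C\mathbb{HE}^1$ is contractible, so $\pi_1(C_1,a) = \langle e \rangle$ while $\{x_*\}$ is closed in $C_1$. Theorem~\ref{th4.2n} then yields $\mathcal{H}_1(\mathcal{G}_1,a) \cong \mathcal{H}_1(C_1,a) \ltimes \prod_{\mathbb{N}}^W \pi_1(\mathcal{G}_1,a)$. The first factor is computed by the cone formula \cite[Theorem 2.13]{1}, namely $\mathcal{H}_1(C\mathbb{HE}^1,a) \cong \mathcal{H}_1(\mathbb{HE}^1,\theta)/\prod_{\mathbb{N}}^W \pi_1(\mathbb{HE}^1,\theta)$, which is the left-hand isomorphism of \eqref{eq3.1}; the right-hand one is then immediate from Lemma~\ref{le4.1nn} and the two group presentations above.

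The point $x_*$ in \eqref{eq1.2} is the heart of the matter. First I would show that $j_*$ is onto: since every $1$-loop of $\mathcal{G}_1$ at $x_*$ is small (see \cite{Gri} and Example~\ref{ex2.10nn}), Theorem~\ref{th2.5n}(i) applied with $U_i = C_i \setminus \{v_i\}$ gives $\mathcal{H}_1(\mathcal{G}_1,x_*) = j_*\mathcal{H}_1(\mathcal{G}_1 \setminus \{v_1,v_2\}, x_*)$. Next, each punctured cone $C_i \setminus \{v_i\}$ deformation retracts rel $x_*$ onto its base earring, and interleaving the two bases identifies $\mathbb{HE}^1 \vee \mathbb{HE}^1$ with $\mathbb{HE}^1$; hence the embedding $i$ is a based homotopy equivalence and $i_* : \mathcal{H}_1(\mathbb{HE}^1,\theta) \to \mathcal{H}_1(\mathcal{G}_1 \setminus \{v_1,v_2\},x_*)$ is an isomorphism. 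Writing $\iota = j \circ i$ for the natural embedding, the composite $\mathcal{H}_1(\iota) = j_* i_*$ is therefore onto with kernel $i_*^{-1} \ker j_*$, and the first isomorphism in \eqref{eq1.2} follows from the first isomorphism theorem.

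The main obstacle is the second isomorphism in \eqref{eq1.2}, i.e. identifying $i_*^{-1}\ker j_*$ with $\prod_{\mathbb{N}}^W \overline{\langle \times^{\sigma}_{\mathbb{N}_e} \mathbb{Z}, \times^{\sigma}_{\mathbb{N}_o} \mathbb{Z}\rangle}^N$ under $\mathcal{H}_1(\mathbb{HE}^1,\theta) \cong \mathcal{B}$. Representing a class by a tuple $(w_k)$ of reduced $\sigma$-words (the homotopy classes of its restrictions to the circles $\mathbb{S}^1_k$), I would prove both inclusions. Necessity splits in two: restricting a based nullhomotopy of the $\iota$-assembled map to each $\mathbb{S}^1_k$ forces $\pi_1(\iota)(w_k) = e$, that is $w_k \in \ker \pi_1(\iota) = \overline{\langle \times^{\sigma}_{\mathbb{N}_e} \mathbb{Z}, \times^{\sigma}_{\mathbb{N}_o} \mathbb{Z}\rangle}^N$ by Bogopolski--Zastrow; and continuity of the nullhomotopy at the limit point $x_*$, combined with the fact that undoing any nontrivial winding around a circle $\mathbb{S}^1_m$ must sweep up the corresponding cone toward a vertex and so cannot have image arbitrarily close to $x_*$, forces all but finitely many $w_k$ to be trivial, giving finite support. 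Sufficiency is routine: a finitely supported tuple of $\mathcal{G}_1$-contractible loops assembles to a based-nullhomotopic map. This convergence argument --- reconciling the per-coordinate contractibility supplied by Bogopolski--Zastrow with the uniform shrinking to $x_*$ forced by the Hawaiian topology, and thereby replacing the full product of kernels by the weak product $\prod^W$ --- is exactly the delicate step; it parallels the single-cone computation \cite[Theorem 2.13]{1}, whose argument I expect to adapt.
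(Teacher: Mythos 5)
Your proposal is correct and follows essentially the same route as the paper: small loops plus Theorem \ref{th2.5n} with the punctured cones and the based deformation retraction onto the embedded earring for (i), Theorem \ref{th4.2n} plus the cone formula of \cite[Theorem 2.13]{1} for (ii), and semilocal strong contractibility for (iii). Even your key finite-support step identifying $i_*^{-1}\ker j_*$ with $\prod_{\mathbb{N}}^W\overline{\langle \times^{\sigma}_{\mathbb{N}_e} \mathbb{Z}, \times^{\sigma}_{\mathbb{N}_o} \mathbb{Z}\rangle}^N$ is the paper's argument stated contrapositively: the paper observes that the restricted nullhomotopies $H_k$ eventually avoid the vertices (by continuity at $\theta$) and composes with the retraction $p$ to kill the tail words, which is exactly your ``nontrivial winding must sweep toward a vertex'' claim.
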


\begin{proof}
\begin{enumerate}[(i)]
\item
 Since all $1$-loops in $\mathcal{G}_1$ at $x_*$ are small, Theorem \ref{th2.5n} implies that $\mathcal{H}_1 (\mathcal{G}_1 , x_*) = j_* \mathcal{H}_1 (U_1 \vee U_2, x_*)$ when $U_1$ and $U_2$ are arbitrary neighbourhoods of $x_*$ in the two cones. Suppose that $U_m$ $(m=1,2)$ is the whole of the corresponding cone except its vertex. Then $U_1 \vee U_2 = \mathcal{G}_1 \setminus \{v_1, v_2\}$ and thus
 \[\mathcal{H}_1(\mathcal{G}_1, x_*) = j_* \mathcal{H}_1 (U_1 \vee U_2, x_*) = j_* \mathcal{H}_1 (\mathcal{G}_1 \setminus \{v_1, v_2\}, x_*).
 \]
Moreover, the embedded Hawaiian earring $i(\mathbb{HE}^1)$ is a deformation retract of $\mathcal{G}_1 \setminus \{v_1, v_2\}$ with projection $p: \mathcal{G}_1 \setminus \{v_1, v_2\} \to \mathbb{HE}^1$ as the retraction. Therefore,
\[ \mathcal{H}_1(\mathcal{G}_1, x_*) = j_* \mathcal{H}_1 (\mathcal{G}_1 \setminus \{v_1, v_2\}, x_*) \cong j_* i_*\mathcal{H}_1 (\mathbb{HE}^1, \theta).
\]
By the First Isomorphism Theorem,
\[
 j_* i_*\mathcal{H}_1 (\mathbb{HE}^1, \theta) \cong \frac{i_*\mathcal{H}_1(\mathbb{HE}^1, \theta)}{\ker j_*} \cong \frac{\mathcal{H}_1(\mathbb{HE}^1, \theta)}{i_*^{-1} \ker j_*}. \]
In the following, we show that $i_*^{-1}\ker j_*$ is mapped isomorphically onto $\prod_{\mathbb{N}}^W\overline{\langle \times^{\sigma}_{\mathbb{N}_e} \mathbb{Z}, \times^{\sigma}_{\mathbb{N}_o} \mathbb{Z}\rangle}^N$, by the same isomorphism which maps $\mathcal{H}_1(\mathbb{HE}^1, \theta)$ onto $\mathcal{B}$, in Lemma \ref{le4.1nn}.

Let $[g] \in i_*^{-1}\ker j_*$. Then $i_* [g] \in \ker j_*$, and therefore, $j \circ i \circ g \simeq C_{x_*}$. Hence, the $1$-loops $(j \circ i \circ g|_{S_k^1})$'s are nullhomotopic with some null convergent sequence of homotopies, say $\{H_k\}$. Thus, there exists $K\in \mathbb{N}$ such that $im H_k \subseteq \mathcal{G}_1 \setminus \{v_1, v_2\}$ for $k \geq K$. Therefore, $i \circ g|_{\mathbb{S}^1_k}$ is nullhomotopic in $\mathcal{G}_1 \setminus \{v_1, v_2\}$ for $k \geq K$ by null convergent homotopies $\{H_k\}_{k \ge K}$. Accordingly, $i \circ g|_{\widetilde{\bigvee}_{k \ge K}\mathbb{S}^1_k}$ is nullhomotpic in $\mathcal{G}\setminus \{v_1, v_2\}$. Therefore, $p \circ i \circ g|_{\widetilde{\bigvee}_{k \ge K}\mathbb{S}^1_k} = g|_{\widetilde{\bigvee}_{k \ge K}\mathbb{S}^1_k}$ is nullhomotopic in $\mathbb{HE}^1$.

Moreover, for $k <K$,  $j \circ i \circ g|_{S^1_k}$ is nullhomotopic in $\mathcal{G}_1$ or equivalently $[g|_{S_k^1}] \in \ker \pi_1(\iota)$. Note that $\ker \pi_1(\iota)$ is mapped isomorphically onto $\overline{\langle \times^{\sigma}_{\mathbb{N}_e} \mathbb{Z}, \times^{\sigma}_{\mathbb{N}_o} \mathbb{Z}\rangle}^N$,  by the same isomorphism mapping $\pi_1(\mathbb{HE}, \theta)$ onto $\times^{\sigma}_{\mathbb{N}} \mathbb{Z}$. Thus, $[g]$ is corresponded injectively to an element of $\prod_{\mathbb{N}}^W\overline{\langle \times^{\sigma}_{\mathbb{N}_e} \mathbb{Z}, \times^{\sigma}_{\mathbb{N}_o} \mathbb{Z}\rangle}^N$, where the corresponding is the same as the isomorphism mapping $\mathcal{H}_1(\mathbb{HE}, \theta)$ onto $\mathcal{B}$, in Lemma \ref{le4.1nn}. One can check that this correspondence is also surjective.

\item
By Theorem \ref{th4.2n},
\[
\mathcal{H}_1(\mathcal{G}_1, a) \cong
\mathcal{H}_1(C\mathbb{HE}^1, a) \ltimes {\prod_{\mathbb{N}}}^W  \pi_1(\mathcal{G}_1, a) \cong \mathcal{H}_1(C\mathbb{HE}^1, a) \ltimes {\prod_{\mathbb{N}}}^W  \frac{\times^{\sigma}_{\mathbb{N}} \mathbb{Z}}{\overline{\langle \times^{\sigma}_{\mathbb{N}_e} \mathbb{Z}, \times^{\sigma}_{\mathbb{N}_o} \mathbb{Z}\rangle}^N}.
\]
Now by \cite[Theorem 2.13]{1}
$\mathcal{H}_1(C\mathbb{HE}^1, a) \cong \frac{{\mathcal{H}}_1(\mathbb{HE}^1,\theta)}{\prod_{\mathbb{N}}^W{\pi}_1(\mathbb{HE}^1,\theta)}$.

By Lemma \ref{le4.1nn}, $ \mathcal{H}_1(\mathbb{HE}^1, \theta) \cong \mathcal{B}$. This isomorphism maps the subgroup $\prod_{\mathbb{N}}^W {\pi}_1(\mathbb{HE}^1,\theta)$ onto $\prod^W_{\mathbb{N}}\times^{\sigma}_{\mathbb{N}} \mathbb{Z}$. Consequently
$\mathcal{H}_1( C\mathbb{HE}^1, a) \cong  \frac{\mathcal{B}}{\prod^W_{\mathbb{N}} \times^{\sigma}_{\mathbb{N}} \mathbb{Z}}$,
and hence the isomorphism \eqref{eq3.1} holds.

\item
Obviously, $\mathcal{G}_1$ is semilocally strongly contractible at $x$. Therefore, $\mathcal{H}_1 (\mathcal{G}_1, x) \cong \prod_{\mathbb{N}}^W \pi_1(\mathcal{G}_1, x)$.
\end{enumerate}
\end{proof}

We define the $n$th Griffiths space by the wedge sum of two copies of cones on $\mathbb{HE}^n$ at the origin for $n\ge 2$. In the following theorem we give some information on the structure of the $n$th Hawaiian group of the $n$th Griffiths space.
Note that by $\mathbb{Z}^k$ we mean  the subgroup of $\bigoplus_{\mathbb{N}} \mathbb{Z}$ consisting of all countably infinite tuples with all zero components except the first $k$ components and thus $\mathbb{Z}^1 \leqslant \mathbb{Z}^2 \leqslant \mathbb{Z}^3 \leqslant \cdots$.

\begin{theorem}\label{th3.2n}
Let $n \ge 2$ and $\mathcal{G}_n$ be the $n$th Griffiths space, $a \in A \cup A'$, and $x \in \mathcal{G}_1 \setminus (A \cup A' \cup \{x_*\})$. Then
\begin{align*}
&\mathrm{(i)}&
 \mathcal{H}_n(\mathcal{G}_n, a) &\cong
\frac{\mathcal{H}_n(\mathbb{HE}^n, \theta)}{\bigoplus_{\mathbb{N}} \pi_n(\mathbb{HE}^n, \theta)}  \oplus \bigoplus_{\mathbb{N}} \pi_n(\mathcal{G}_n) \cong
\frac{\prod_{\mathbb{N}} \bigoplus_{\mathbb{N}} \mathbb{Z}}{\bigcup_{k \in \mathbb{N}} \prod_{\mathbb{N}} \mathbb{Z}^k}    \oplus \bigoplus_{\mathbb{N}} \pi_n(\mathcal{G}_n) 
\cr
& \mathrm{(ii)}&
\mathcal{H}_n( \mathcal{G}_n , x) &\cong \bigoplus_{\mathbb{N}} \pi_n(\mathcal{G}_n) \label{eq2.9}.
\end{align*}
\end{theorem}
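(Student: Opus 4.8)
The plan is to mirror the proof of Theorem \ref{th4.3n}, replacing each $n=1$ ingredient by its $n\ge 2$ counterpart and then carrying out the final group-theoretic identification. I would dispatch part (ii) first, as it is the cheaper statement. The point $x$ lies in the interior of one of the two cones, away from the singular axes $A\cup A'$ and from the wedge point, so $\mathcal{G}_n$ is semilocally strongly contractible at $x$: a small neighbourhood $U$ of $x$ contracts in $\mathcal{G}_n$ relative to $\{x\}$. Hence in the decomposition of Note \ref{le3.2n}, the factor $j_*\mathcal{H}_n(U,x)$ is trivial and $\mathcal{H}_n(\mathcal{G}_n,x)=\prod^W_{\mathbb{N}}\pi_n(\mathcal{G}_n,x)$; since $n\ge 2$ every group in sight is abelian, so $\prod^W_{\mathbb{N}}$ coincides with $\bigoplus_{\mathbb{N}}$, which is exactly assertion (ii). This is the verbatim analogue of Theorem \ref{th4.3n}(iii).

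For part (i), write $\mathcal{G}_n=(C\mathbb{HE}^n,x_*)\vee(C\mathbb{HE}^n,x_*)$ and take $a$ in the first cone factor $X_1=C\mathbb{HE}^n$ with $a\neq x_*$. As $C\mathbb{HE}^n$ is contractible we have $\pi_n(C\mathbb{HE}^n,a)=\langle e\rangle$, so Theorem \ref{th4.4} applies and yields $\mathcal{H}_n(\mathcal{G}_n,a)\cong\mathcal{H}_n(C\mathbb{HE}^n,a)\oplus\bigoplus_{\mathbb{N}}\pi_n(\mathcal{G}_n,a)$. The cone formula \cite[Theorem 2.13]{1} then identifies $\mathcal{H}_n(C\mathbb{HE}^n,a)\cong\mathcal{H}_n(\mathbb{HE}^n,\theta)\big/\bigoplus_{\mathbb{N}}\pi_n(\mathbb{HE}^n,\theta)$, which is precisely the first isomorphism in (i).

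The remaining, and genuinely delicate, task is to bring $\mathcal{H}_n(\mathbb{HE}^n,\theta)\big/\bigoplus_{\mathbb{N}}\pi_n(\mathbb{HE}^n,\theta)$ into the closed form on the right. Here I would invoke two facts. First, by Eda and Kawamura \cite{EdaKaw}, for $n\ge 2$ the coordinate homomorphism $\pi_n(\mathbb{HE}^n,\theta)\to\prod_{\mathbb{N}}\mathbb{Z}$ induced by the retractions onto the target spheres is an isomorphism. Second, by the $n\ge 2$ analogue of Lemma \ref{le4.1nn} (the higher-dimensional version of \cite[Theorem 2.9]{1}), $\mathcal{H}_n(\mathbb{HE}^n,\theta)$ is the subgroup of $\prod_{k}\pi_n(\mathbb{HE}^n,\theta)=\prod_{k}\prod_{m}\mathbb{Z}$ consisting of those doubly indexed families $(w_{k,m})$ for which, for each fixed target index $m$, only finitely many domain indices $k$ satisfy $w_{k,m}\neq 0$; this is the uniform–convergence condition, expressing that the images of the restrictions $f|_{\mathbb{S}^n_k}$ must eventually avoid each fixed target sphere. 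Viewing an element as a matrix $(w_{k,m})$, the Hawaiian group is thus the group of matrices with all \emph{columns} finitely supported, while $\bigoplus_{\mathbb{N}}\pi_n(\mathbb{HE}^n,\theta)$ is the subgroup of matrices with only finitely many nonzero \emph{rows}.

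The key move is the transpose $(w_{k,m})\mapsto(w_{m,k})$, interchanging the domain and target indices. It carries the group of column-finite matrices isomorphically onto the group of row-finite matrices, i.e. onto $\prod_{\mathbb{N}}\bigoplus_{\mathbb{N}}\mathbb{Z}$, and simultaneously carries the subgroup of matrices with finitely many nonzero rows onto the subgroup of matrices with finitely many nonzero columns, which is exactly $\bigcup_{k}\prod_{\mathbb{N}}\mathbb{Z}^k$ in the notation fixed before the statement. Passing to quotients gives the second isomorphism in (i), and combining it with the splitting from Theorem \ref{th4.4} finishes the proof. I expect this transpose bookkeeping to be the main obstacle: one must verify carefully that the convergence condition on $\mathcal{H}_n(\mathbb{HE}^n,\theta)$ is precisely column-finiteness and that, under the interchange of the two indices, the weak-product subgroup $\bigoplus_{\mathbb{N}}\pi_n(\mathbb{HE}^n,\theta)$ matches $\bigcup_{k}\prod_{\mathbb{N}}\mathbb{Z}^k$ rather than the superficially similar $\bigoplus_{\mathbb{N}}\bigoplus_{\mathbb{N}}\mathbb{Z}$. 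Everything else is a direct transcription of the $n=1$ argument, with the direct sum $\oplus$ replacing the semidirect product $\ltimes$ because all groups are now abelian.
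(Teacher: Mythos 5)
Your proposal is correct and takes essentially the same route as the paper: part (ii) follows from semilocal strong contractibility of $\mathcal{G}_n$ at $x$ (the paper cites Theorem \ref{th2.6n}, you use Note \ref{le3.2n} --- same reasoning), and part (i) follows from Theorem \ref{th4.4} together with the cone formula \cite[Theorem 2.13]{1}. The only difference is that for the final identification the paper simply cites \cite[Theorem 2.11]{1}, which already provides the isomorphism $\mathcal{H}_n(\mathbb{HE}^n,\theta)\cong\prod_{\mathbb{N}}\bigoplus_{\mathbb{N}}\mathbb{Z}$ carrying $\bigoplus_{\mathbb{N}}\pi_n(\mathbb{HE}^n,\theta)$ onto $\bigcup_{k\in\mathbb{N}}\prod_{\mathbb{N}}\mathbb{Z}^k$, whereas you re-derive that statement via Eda--Kawamura and the (correct) column-finite/transpose bookkeeping.
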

\begin{proof}
\begin{enumerate}[(i)]
\item
By Theorem \ref{th4.4},
$\mathcal{H}_n(\mathcal{G}_n, a) \cong
\mathcal{H}_n(C\mathbb{HE}^n, a) \oplus \bigoplus_{\mathbb{N}} \pi_n(\mathcal{G}_n, a)$.
Using \cite[Theorem 2.13]{1},
$\mathcal{H}_n(C\mathbb{HE}^n, a) \cong \frac{{\mathcal{H}}_n(\mathbb{HE}^n,\theta)}{\bigoplus_{\mathbb{N}} \pi_n(\mathbb{HE}^n , \theta)}$. Replacing  $\mathcal{H}_n(\mathbb{HE}^n, \theta)$ with $\prod_{\mathbb{N}}\bigoplus_{\mathbb{N}} \mathbb{Z}$ via \cite[Theorem 2.11]{1}, which maps $\bigoplus_{\mathbb{N}} \pi_n(\mathbb{HE}^n , \theta)$ isomorphically onto $\bigcup_{k \in \mathbb{N}} \prod_{\mathbb{N}} \mathbb{Z}^k$, the result holds.
\item
Since $\mathcal{G}_n$ is semilocally strongly contractible at $x$, the isomorphism holds by Theorem \ref{th2.6n}.
\end{enumerate}
\end{proof}

We recall that the $n$th Hawaiian group of the $n$th Griffiths space at the common point is generated by two its subgroups;  $\mathcal{H}_n(\mathcal{G}_n, x_*) =  j_* \mathcal{H}_n( U_1 \vee U_2, x_*)   \big(\bigoplus_{\mathbb{N}} \pi_n(\mathcal{G}_n,x_*)\big)$, when $U_1$ and $U_2$ are arbitrary neighbourhoods of the origin in two cones, and $j:U_1 \vee U_2 \to \mathcal{G}_n$ is the inclusion map.

\section*{Acknowledgements}
This research was supported by a grant from Ferdowsi University of Mashhad-Graduate Studies (No. 42705).

\section*{References}



\begin{thebibliography}{99}
\bibitem{1} A. Babaee, B. Mashayekhy and H. Mirebrahimi, On Hawaiian groups of some topological spaces, \textit{Topology Appl.}  {\bf 159} (8) (2012) 2043--2051.

\bibitem{BogZas}
O. Bogopolski, A. Zastrow, The word problem for some uncountable groups given by countable words, \textit{Topology Appl.}  {\bf 159} (2012) 569--586.

%

\bibitem{Eda} K. Eda, Free $\sigma$ products and noncommutatively slender groups, \textit{Journal of Algebra}  {\bf 148} (1992) 243--263.

\bibitem{EdaKaw} K. Eda and K. Kawamura, Homotopy and homology groups of the $n$-dimensional Hawaiian Earring,
\textit{Fundamenta Mathematicae}  {\bf 165} (1) (2000) 17--28.



\bibitem{Gri} H.B. Griffiths, The fundamental group of two spaces with a common point, \textit{Quarterly Journal of Mathematics.} Oxford Series (2)  {\bf 5} (1954) 175--190; correction: ibid. 6 (1955) 154--155.


\bibitem{KarRep}  U.H. Karimov and  D. Repov\v{s}, Hawaiian groups of topological spaces (Russian),  \textit{Uspekhi. Mat.
Nauk.}  {\bf 61} (5) (2006) 185--186; transl. in \textit{Russian Math. Surv.} {\bf 61} (5) (2006) 987--989.

%


\bibitem{PasGha} H. Passandideh and F.H. Ghane, Homotopy properties of subsets of Euclidean spaces, \textit{Topology Appl.}  {\bf 194} (2015) 202--211.


\bibitem{Rob} D.J.S. Robinson, \textit{A Course in the Group Theory}, Springer-Verlag, New York, 1996.


\bibitem{Swi} R.M. Switzer, \textit{Algebraic Topology-Homotopy and Homology}, Springer-Verlag, Berlin, 1975.

\bibitem{Vir} \v{Z}. Virk, Small loop spaces, \textit{Topology Appl.} {\bf 157} (2010) 451--455.


\end{thebibliography}
\end{document}